\documentclass{amsart}

  \usepackage[all]{xy}

  \usepackage{epsf,epsfig,amsfonts,graphicx,color}

\usepackage{amsmath,amssymb}
  \usepackage{url}   

  \usepackage{epstopdf}
\usepackage{hyperref}
\usepackage{upgreek}
\usepackage{comment}
\usepackage{pdfsync}
\usepackage{tikz-cd}

 \def\R{\mathbb R}

 \def \E {\mathbb E}

 \def \S {\mathbb S}

\def \nbhd { neighborhood }

\newcommand{\beq}{\begin{equation}}
\newcommand{\eeq}{\end{equation}}
\newcommand{\Leq}[1]{\label{#1}\end{equation}}

\newtheorem{theorem}{Theorem}[section]
\newtheorem{proposition}{Proposition}[section]

\newtheorem{lemma}{Lemma}[section]
\newtheorem{definition}{Definition}[section]
\newtheorem{remark}{Remark}[section]

\newcommand{\A}{\mathcal A}
\newcommand{\ey}{\frac{1}{2}}

\newcommand{\gm}{\gamma}

 \author[Richard Montgomery]{Richard Montgomery}
\address{University of California, Santa Cruz, Santa Cruz, CA 95064, USA }
\email{rmont@ucsc.edu}
\author[Guowei Yu]{Guowei Yu}
\address{Chern Institute of Mathematics and LPMC, Nankai University, Tianjin, 300071, China}
\email{yugw@nankai.edu.cn}

\thanks{ The second author is supported by NSFC (No. 12671224), Nankai Zhide Fundation and the Fundamental Research Funds for the Central Universities.}

\begin{document}

\title{Approach to total collision in the spatial 4-body problem}

\begin{abstract}
Every solution  for the classical four body problem in space which
ends in total collision has a limiting shape.  We prove
that when  that shape is  planar  the   entire solution lies in a fixed plane and when   that shape is collinear
  the  entire solution lies in a fixed line. 
\end{abstract}

\maketitle

\section{Results} 

Every  total collision for the classical   four-body problem has a limiting shape.  That shape is one of a finite number of   central configurations [CCs]. 
See \cite{Hamp} and \cite{Albouy} regarding their  finiteness and   \cite{MoeckelCC} or chapter 1 of the book \cite{Mont}   for a 
general treatment of CCs.


 We   will  often be talking  about the  general  $N$-body problem in $d$-space. 
  An $N$-body configuration will always be  assumed   to have its 
 center of mass at the origin of the $d$-dimensional Euclidean space in which the $N$ point masses lie.   We call the  configuration   ``collinear '' if  the linear span of the  $N$  vectors describing the locations of the $N$ bodies  
  is 1-dimensional. We call the configuration ``planar'' if this span is 2-dimensional, and we call the configuration  ``spatial'' if this span is 3-dimensional.  
   We will call a solution to the $N$-body problem  `collinear' if at every instant all  $N$ bodies lie in a fixed line and we will
   call the solution `planar' if at every instant
   the $N$ bodies lie in a fixed plane.   Here, then,  is our main result:  
    \begin{theorem} \label{thm;planar}  Consider a   total collision  solution for the spatial 4-body problem.
    If the solution's   limiting shape is planar then the solution is planar.  If the solution's limiting shape is collinear then
    the solution is collinear.   
\end{theorem}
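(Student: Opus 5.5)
The plan is to use McGehee blow-up coordinates near total collision and linearize at the limiting central configuration. Along a total collision solution the angular momentum vanishes (Sundman). Write $q = r s$ with $r = \sqrt{I}$ and $s$ on the normalized shape sphere. With the new time $d\tau = r^{-3/2}dt$ and the scaled velocity $v = r^{1/2}\dot q$, the flow extends to the collision manifold $\{r=0\}$. Its rest points there are the pairs $(s_0, v_0)$ with $s_0$ a normalized CC and $v_0 = -\sqrt{2U(s_0)}\,s_0$ (incoming branch). The theorem of Chazy/Saari/Moeckel shows that the solution converges, as $\tau\to\infty$, to one such rest point, corresponding to the limiting shape.

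Next decompose the configuration space. Suppose the limiting shape $s_0$ is planar and spans the plane $P$. Write each position $q_i = x_i + z_i e_3$ with $x_i \in P$. The planar-invariant subspace $\{z = 0,\ \dot z = 0\}$ is invariant, and we want to show the solution lies in it. The key step is to linearize the blown-up vector field at the rest point in the directions transverse to this subspace, i.e. in the $(z, w)$ variables, where $w$ is the scaled $\dot z$. By the symmetry $z \mapsto -z$, the transverse linearization decouples from the in-plane linearization. It takes the form $z'' + (\text{const})\,z' = -\,r$-scaled Hessian term. Explicitly, the transverse equations are
\[
m_i \ddot z_i = -\sum_j \frac{m_i m_j (z_i - z_j)}{|x_i - x_j|^3} + O(z^3),
\]
i.e. $M\ddot z = -A(x)z$, where $A$ is the "vertical" Hessian. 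At a CC, $A(s_0)$ restricted to center-of-mass-zero vectors has eigenvalues $\lambda U(s_0)$-multiples on the rotation directions (the vertical components of $\mathrm{Rot}$, namely $z = \alpha\cdot s_0$), plus others. Rescaling to blow-up time gives the linear system $z'' + \tfrac12 v_0 z' + \mu z = 0$ type, with eigenvalues $\rho$ solving $\rho^2 - c\rho + \mu = 0$, where $c=\tfrac12\sqrt{2U}$-like.

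We then invoke a stable-manifold argument. A solution converging to the rest point lies on its stable manifold, so its transverse component decays at a rate governed by eigenvalues with negative real part in the incoming direction. The goal is to show that every transverse eigenvalue has real part of the "wrong" sign, i.e. nonnegative in the collision direction. Then the stable manifold of the rest point lies in the invariant planar subspace, which forces $z\equiv 0$, $w \equiv 0$. For the vertical rotation modes ($z_i = \langle \alpha, x_i\rangle$) the eigenvalues come out exactly from the zero-angular-momentum constraint: rotation out of the plane would carry angular momentum, and $J=0$ kills those modes. So the essential computation is to show that on the complement the vertical Hessian is positive enough, namely $A \ge U(s_0)\,M$ in the appropriate sense. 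This makes the roots $\rho$ either complex with real part of the unstable sign or positive. This is the main obstacle. For four bodies, planar CCs have vertical eigenvalues that can be checked by a general inequality, namely the planar CC is a minimum in vertical directions of $U$ restricted to the sphere, with Hessian eigenvalues $\ge U$ (a Moeckel/Pacella-type result). I would try to prove $\langle A z, z\rangle \ge U(s_0)\langle Mz,z\rangle$ for $z \perp \{1, x\text{-linear}\}$ using the explicit form $\sum m_im_j (z_i-z_j)^2/r_{ij}^3$ and the CC equations. With $N=4$ the complement is 1-dimensional, so only one Rayleigh quotient needs computing, via the CC equations as a weighted identity.

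For the collinear case, repeat the argument with the transverse space being the two directions orthogonal to the line. Each coordinate gives a copy of the same vertical Hessian, now for a collinear CC, where the complement of translations and rotations is 2-dimensional. The corresponding inequality for collinear CCs (Conley/Pacella: the vertical Hessian eigenvalues exceed $U$) gives the same conclusion. Finally, we need to handle degenerate cases with an eigenvalue on the boundary (center manifold directions), by noting that the restriction to the invariant subspace and analyticity force the transverse component to vanish. We also need to handle the possible slow convergence allowed by the Chazy theory; the hyperbolicity established above rules this out in the transverse directions.
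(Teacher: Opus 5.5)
Your transverse linearization and key inequality match the paper's. The vertical block is $\delta s''+\tfrac12\nu_0\,\delta s'+(M^{-1}A-U(s_0))\delta s=0$. The inequality $\langle Az,z\rangle\ge U(s_0)\langle Mz,z\rangle$, with equality exactly on the vertical rotations $z_i=\langle\alpha,x_i\rangle$, is precisely vertical regularity. One sign slip: with the paper's $U>0$ this makes $s_0$ a vertical \emph{maximum} of $\hat U$, not a minimum.

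\textbf{Gap 1: the neutral directions.} The vertical rotations (Hessian eigenvalue $0$) give the pair of eigenvalues $0$ and $-\nu_0/2$.
\begin{itemize}
\item $J=0$ does remove the $-\nu_0/2$ eigenvector $(0,0,As_0,-\tfrac{\nu_0}{2}As_0)$, but that direction is unstable anyway.
\item The eigenvalue-$0$ eigenvector $(0,0,As_0,0)$ is a rigid rotation of positions with zero velocity. It is annihilated by $d\tilde J$, since $d\tilde J(\delta s,\delta w)=s_0\wedge\delta w$ at $w=0$. It is tangent to the orbit $Gs_0$ of rotated equilibria, all of which lie in $\{J=0\}$.
\end{itemize}
So $J=0$ does not kill this mode. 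A trajectory whose $\omega$-limit lies in $Gs_0$ is only known to lie in a center-stable manifold, which at $s_0$ is tangent to $E^s_{hor}\oplus E^0_{hor}\oplus(\mathfrak g(s_0)\cap VERT)\not\subset HOR$. Your eigenvalue count therefore does not give ``the stable manifold lies in the planar subspace.'' Restricting to $\{J=0\}$ is also a bad choice in the collinear case, where that set is singular at the equilibrium.

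\textbf{The missing idea: absorb these directions with the rotation group.} The paper proves $E^s\oplus E^0=E^s_{hor}\oplus E^0_{hor}+\mathfrak g(s_0)$. Hence the $G$-invariant center-stable manifold of the whole orbit $Gs_0$ is $G\cdot W^{sc}_{hor}$, and the solution lies in $g\,PLANAR$ for some $g\in G$. This needs neither nondegeneracy nor convergence to a single rest point. Chazy/Saari/Moeckel do not give you that convergence: they give convergence of the shape modulo rotation, and the paper explicitly leaves infinite spin open.

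\textbf{Partial repair of your route.} If $s_0$ is nondegenerate, $Gs_0$ is a normally hyperbolic manifold of equilibria. The solution then converges to one point $gs_0$ and lies in its strong stable fiber. That fiber is unique and tangent to $E^s(gs_0)=E^s_{hor}(gs_0)$, so it equals the strong stable fiber of the subproblem in $g\R^k$.

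\textbf{Gap 2: degenerate CCs.} These occur at special masses and bring extra horizontal center directions, so convergence can be non-exponential. ``Analyticity forces the transverse component to vanish'' is not an argument. This is exactly the case the paper's center-stable bookkeeping handles.

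\textbf{The planar inequality.} Your one-Rayleigh-quotient plan does work, but you left it unexecuted. Take $m_iz_i=\Delta_i$, the Dziobek vector, and use Dziobek's relations $m_im_j\bigl(r_{ij}^{-3}-U(s_0)/\sum_l m_l\bigr)=\kappa\Delta_i\Delta_j$. These give $\langle Az,z\rangle-U(s_0)\langle Mz,z\rangle=-\kappa\bigl(\sum_i\Delta_i^2/m_i\bigr)^2$, which is positive since $\kappa<0$. The paper gets the same conclusion from Moeckel's bound (vertical index $\ge 1$) plus the count $\dim V=3$, $\dim(\mathfrak g(s_0)\cap V)=2$.
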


There is only one spatial CC shape for the 4-body problem: the regular tetrahedron. 
It follows from the theorem that if a total collision solution for the 4-body problem passes at some instant through a
spatial configuration then that solution's limiting shape is the regular tetrahedron.

This theorem  generalizes a well-known  result for the planar 3-body problem   
asserting that any total collision solution for that problem   whose limiting shape is collinear
is a collinear solution: all 3 bodies lie on a fixed line for all times up to collision.  That collinear theorem
holds in greater generality. 

 \begin{theorem} \label{thm;collinear} Consider a   total collision  solution for the  N-body problem in $d$-dimensional Euclidean space.
    If the solution's   limiting shape is collinear then
    the solution is collinear.   
\end{theorem}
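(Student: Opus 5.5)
The plan is to split each position into a component along the limiting line and a component perpendicular to it, and to show that the perpendicular part obeys, to leading order, a linear equation of Euler type whose solutions cannot shrink faster than the size of the configuration unless they vanish identically. First I would use the classical facts about total collision. Sundman's theorem gives zero angular momentum. The Sundman--McGehee asymptotics give $r(t)\sim c\,(t_*-t)^{2/3}$ with $r=\sqrt{I}$, and $q(t)/r(t)\to x$, where $x$ is a collinear central configuration. Rotating coordinates once, I place $x$ along $e_1$, so $x_i=\xi_i e_1$ with $\xi_i\in\R$. The central configuration equation then reads
\[
\sum_{j\neq i} m_j\frac{\xi_i-\xi_j}{|\xi_i-\xi_j|^3}=\lambda\,\xi_i ,\qquad \lambda=U(x)/I(x).
\]
Write $q_i=u_i e_1+y_i$ with $y_i\perp e_1$. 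Rather than this fixed splitting, one could use the eigenvalues of the inertia tensor $\sum m_i q_iq_i^T$: collinearity means all but one of them vanish. I will use the fixed splitting, but allow a slowly rotating frame if needed.

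The perpendicular equations are exactly
\[
\ddot y_i=-\sum_{j\ne i} m_j\frac{y_i-y_j}{|q_i-q_j|^3},
\]
that is, $\ddot y=-A(t)y$, where $A(t)$ is the mass-weighted graph Laplacian with weights $m_im_j/r_{ij}^3$. This equation is linear in $y$ with coefficients depending on the whole motion. Since $r_{ij}\sim r|\xi_i-\xi_j|$ and $r^3\sim c^3(t_*-t)^2$, we get $A(t)=(t_*-t)^{-2}\big(A_0+o(1)\big)$. Here $A_0$ is the Laplacian of the limiting collinear CC, suitably normalized. With $s=t_*-t$, the model equation $y''=-s^{-2}A_0y$ has solutions $s^{\alpha}$ along each eigenvector of $A_0$ with eigenvalue $\mu$, where $\alpha(\alpha-1)=-\mu$. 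The homothetic solution shows that $\xi$ itself is an eigenvector with $\mu=2/9$, giving $\alpha=2/3$, the rate of $r$.

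The key spectral input is the following: on the center-of-mass-zero subspace, every eigenvalue of $A_0$ other than the $\xi$-direction is strictly larger than $2/9$, i.e.\ larger than the CC eigenvalue $\lambda$ once normalized. This is the classical fact, due to Conley and Moeckel, that the collinear CC is the bottom of the spectrum of this Laplacian. It can be proved by writing $A_0$ as a Jacobi-type matrix for ordered masses and comparing it with the CC equation. Granting it, for $\mu>2/9$ both roots satisfy $\mathrm{Re}\,\alpha<2/3$: either $\mathrm{Re}\,\alpha=1/2$, or $\alpha<2/3$ in the real case when $2/9<\mu\le 1/4$. In the $\xi$-mode the two roots are $2/3$ and $1/3$. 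The root $2/3$ is an infinitesimal rotation of the line, and I would kill it using zero angular momentum. Indeed, the angular momentum components in the $e_1\wedge e_k$ planes are $\sum m_i(u_i\dot y_i-y_i\dot u_i)$, and to leading order these equal the Wronskian pairing of the $y$-component in the $\xi$-mode against the radial solution. If that component contained the $s^{2/3}$ mode, this pairing would be nonzero. Hence every mode that survives has $|y|/r\gtrsim s^{\mathrm{Re}\,\alpha-2/3}\to\infty$ or bounded below, unless its coefficient is zero. This contradicts $y/r\to 0$, which follows from $q/r\to x$.

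The main obstacle is making this rigorous for the true equation $\ddot y=-s^{-2}(A_0+o(1))y$, and in particular excluding solutions that are nonzero yet decay faster than every model mode. I would pass to logarithmic time $\tau=-\log s$ and rescale by $z=y/s^{1/2}$. This turns the system into an autonomous linear one plus a perturbation whose coefficients tend to $0$ as $\tau\to\infty$. I would then apply a Levinson-type asymptotic theorem, or more simply a Lyapunov/energy functional $E=|z'|^2+\langle(A_0-\tfrac14)z,z\rangle$ on each spectral block. The aim is a lower bound $|y(s)|\ge c\,s^{\beta}$ with $\beta<2/3$, valid whenever $y\not\equiv 0$.

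The required input is that the $o(1)$ error is integrable in $\tau$, i.e.\ that $q/r$ converges to $x$ at a power rate. This follows from the hyperbolicity of the collinear rest point within the collinear collision manifold (McGehee coordinates). Even without such a rate, a Gr\"onwall-type argument on $\log|z|$ should suffice to give $\liminf |y|/r>0$.

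Finally, the frame subtlety, namely that $e_1$ is fixed only in the limit, is handled because the splitting uses the fixed limiting direction. The term $u_i$ then absorbs the collinear motion along $e_1$. The conclusion $y\equiv 0$ means that all bodies lie on the fixed line $\R e_1$ for all times up to collision, which is the statement of Theorem~\ref{thm;collinear}.
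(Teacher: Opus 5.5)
\textbf{Verdict: genuine gap, repairable.} Your spectral input is the paper's. On each transverse coordinate the vertical Hessian of $\hat U$ at the normalized CC is $U(x)-A(x)$, where $A(x)$ is your Laplacian built from $x$. So ``$\xi$ is the simple bottom eigenvector'' is exactly vertical regularity of collinear CCs (Proposition~\ref{propC}). Your observation that the transverse coordinates obey the exactly linear equation $\ddot y=-A(t)y$ opens a genuinely different route from the blow-up/stable-center-manifold argument.

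\textbf{Where it fails: the $s^{2/3}$ mode in the $\xi$-direction.}

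First, you assume $q/r\to x$ for one fixed $x$. The hypothesis only gives convergence of the \emph{shape}. Convergence to a single point of the orbit $Gx$ (no infinite spin) is not a consequence of Sundman's asymptotics, and the paper explicitly leaves infinite spin open. It works with the stable-center manifold of the whole orbit $Gs_0$ precisely to avoid this. For collinear CCs the claim is in fact true, by nondegeneracy and normal hyperbolicity, but proving it needs the transverse spectral analysis the paper does.

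Second, zero angular momentum cannot substitute for this, and your claim that it kills the $s^{2/3}$ mode is backwards. The $e_1$-component $u$ solves the same equation $\ddot u=-A(t)u$. Hence $J_{1k}=\langle u,\dot y\rangle-\langle \dot u,y\rangle$ is exactly the conserved Wronskian of two solutions. The $s^{2/3}$ mode is (exactly) $y=\theta u$, a rotation of the line by a \emph{constant} angle, and $W(u,\theta u)=0$. What $J=0$ excludes is the $s^{1/3}$ mode, a spinning line (compare the $-\nu_0/2$ eigenvector in the appendix), and $y/r\to 0$ already excludes that one.

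Consequences:
\begin{itemize}
\item The rotation mode is removed only by the unproved fixed-frame assumption.
\item Your target bound $|y(s)|\ge c\,s^{\beta}$ with $\beta<2/3$ ``whenever $y\not\equiv0$'' is false, since $y=\theta u$ violates it.
\item A power rate for $q/r\to x$ does not follow from hyperbolicity inside the collinear collision manifold, because the solution is not yet known to be collinear. You would need transverse hyperbolicity, which is the paper's vertical analysis.
\item Without a rate, Gr\"onwall alone fails: $o(1)$ perturbations permit decay like $e^{-\tau/6}/\tau$.
\end{itemize}

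\textbf{How to repair it within your own set-up.} Since $A(t)$ depends only on mutual distances, $s^2A(t)\to A_0$ follows from convergence of shape together with $r^3\sim\frac{9}{2}U(x)s^2$. Moreover, \emph{every} coordinate column $q^{(j)}$, $j=1,\dots,d$, of the configuration solves $\ddot\eta=-A(t)\eta$ on the center-of-mass-zero space.
\begin{itemize}
\item With $\tau=-\log s$ and $z=s^{-1/2}\eta$ the equation becomes $z''=(\frac14-A_0+o(1))z$.
\item Its limiting exponents are $-\frac16$, which is simple and comes from $\mu=\frac29$, and others with real part $>-\frac16$.
\item By this gap and the roughness of exponential dichotomies, the solutions with $\eta=O(s^{2/3})$ and $\dot\eta=O(s^{-1/3})$ form a space of dimension at most one.
\item Every column satisfies these bounds by Sundman's estimates. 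Hence $q^{(j)}=c_j\phi$ for a single solution $\phi$, i.e.\ $q_i(t)=\phi_i(t)\,c$ with $c\in\R^d$ fixed, and the motion is collinear.
\end{itemize}

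This version needs no angular momentum, no fixed limiting line and no convergence rate. Replacing $\xi$ by the $k$-dimensional bottom eigenspace spanned by the coordinate columns of $x$, the same argument should give Proposition~\ref{prop: main} for any vertically regular CC. That would trade the paper's center-manifold bookkeeping for an elementary linear-ODE argument.
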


Our initial motivation behind these theorems was variational. We can use the direct method of the calculus of variations to minimize action from any given  configuration (which is not a total collision) to a total collision, thus arriving at a total collision solution starting from the given configuration. If that fixed configuration is spatial then, by what we have just said, this action minimizer must tend to a total collision along a regular tetrahedral shape.  This variational thinking led us to the following result. 

\begin{theorem}
\label{thm;mini-coll} For the spatial four-body problem, given any positive time $T$ and spatial configuration $q_0$, there exists at least one total collision solution $q(t)$, which starts from $q_0$ at $t=0$ and approaches to a total collision in the shape of a regular tetrahedral as $t$ goes to $T$. 

Moreover for any $t \in (0, T)$, $q(t)$ must be a spatial configuration with the same orientation as $q_0$ and so is the orientation of the limiting regular tetrahedron at total collision. 
\end{theorem}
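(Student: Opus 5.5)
We plan to obtain the solution by the direct method of the calculus of variations and then use Theorem \ref{thm;planar} to identify its limiting shape. Fix $T>0$ and a spatial configuration $q_0$ with center of mass zero. Let $\Gamma$ be the set of $H^1$ paths $q:[0,T]\to \mathcal X$ (configurations with center of mass at the origin) with $q(0)=q_0$ and $q(T)=0$. We minimize the Lagrangian action
\[ \A(q)=\int_0^T \Big(\ey \|\dot q\|^2_m + U(q)\Big)\,dt \]
over $\Gamma$, or more precisely over the subclass $\Gamma^+\subset\Gamma$ of paths with $\det(q_2-q_1,q_3-q_1,q_4-q_1)\ge 0$ when $q_0$ has positive orientation, and symmetrically otherwise. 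The action is coercive and weakly lower semicontinuous on $\Gamma$. The orientation condition is closed under uniform convergence, and $H^1$-weak convergence implies uniform convergence, so a minimizer $q$ exists in $\Gamma^+$. Since $\A(q)<\infty$, the set of collision times has measure zero.

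\textbf{Step 1: the minimizer is collision-free on $(0,T)$.} This is the main obstacle. We would invoke Marchal's averaged-variation theorem, in the form of Chenciner or Ferrario--Terracini, which holds in dimension $d\ge 2$ and is local in time and space. It shows that a fixed-endpoint action minimizer has no interior collisions, partial or total. We must check that the deformations used there can be chosen to respect the constraint $\det\ge0$. If the minimizer never touches $\{\det=0\}$ on $(0,T)$, the constraint is inactive and the standard argument applies. Away from collisions, $q$ then solves Newton's equations on $(0,T)$. Sundman's theorem and the results quoted above give a limiting central configuration shape as $t\to T$.

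\textbf{Step 2: $q(t)$ is spatial with the orientation of $q_0$ for all $t\in(0,T)$.} Suppose $\det q(t_1)=0$ for some $t_1\in(0,T)$. If the constraint is active at $t_1$, we argue as follows. Reflect the arc on $[0,t_1]$ through the plane containing $q(t_1)$. If the configuration is collinear, reflect through any plane containing the line. The result is a concatenated path in $\Gamma$ with the same action. We cannot conclude directly, since the reflected arc lies in the opposite-orientation class. Instead, compare with the unconstrained problem: the action is invariant under $O(3)$, so the unconstrained minimum over $\Gamma$ is attained by some path. A reflection trick applied to any arc of a minimizer lying on the wrong side, reflecting it about the plane at its first crossing time, moves it into $\Gamma^+$ without changing the action. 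Hence $\min_{\Gamma^+}\A=\min_\Gamma\A$, and $q$ is an unconstrained minimizer, so it is a genuine collision-free solution on $(0,T)$.

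Now apply Theorem \ref{thm;planar} to the tail $q|_{[t_1,T)}$. This part of the argument relies on the claim that the reflected path is again a minimizer, hence a smooth solution. The reflected path and $q$ then agree in position at $t_1$. If $q(t_1)$ is planar with planar velocity, uniqueness for ODEs forces the whole solution to be planar, contradicting that $q_0$ is spatial. If the velocity is not planar, the reflected path has a corner at $t_1$ in the normal velocity component. Such a corner contradicts regularity of minimizers, so the normal velocity must vanish. That returns us to the planar case and its contradiction. Hence $\det q(t)>0$ on $(0,T)$.

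\textbf{Step 3: the limiting shape and its orientation.} Since $q(t)$ is spatial at some instant, Theorem \ref{thm;planar} rules out planar and collinear limiting shapes. The limit must therefore be the only spatial CC, the regular tetrahedron. The normalized configurations $q(t)/\|q(t)\|$ converge to this tetrahedron and satisfy $\det\ge0$ along the way. A regular tetrahedron has $\det\neq 0$, so the limit has $\det>0$, which is the orientation of $q_0$.

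The delicate points are Step 1, checking that Marchal-type local deformations are compatible with the orientation constraint, and the corner argument in Step 2. If necessary we would avoid the constraint entirely. In that version we minimize over all of $\Gamma$ and use the reflection argument of Step 2 only to show that the minimizer cannot pass through a degenerate configuration.
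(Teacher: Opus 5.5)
Your proposal is in substance the paper's proof. The paper minimizes the action over paths from $q_0$ to total collision and uses Marchal's lemma (via Chenciner) to get a collision-free solution on $(0,T)$. At a degenerate time it reflects part of the path through a plane containing that configuration, which gives an equal-action competitor. That competitor would have a corner unless the velocity lies in the plane; in that case uniqueness for the invariant subproblem makes the whole solution planar, contradicting that $q_0$ is spatial. Theorem \ref{thm;planar}, used implicitly in the paper just as in your Step 3, then identifies the limit as the regular tetrahedron, and continuity of $\det$ gives the orientation.

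\textbf{The error.} You reflect the arc on $[0,t_1]$. This moves the starting point to $g q_0\neq q_0$, so the concatenated path is \emph{not} in $\Gamma$ and is not a competitor in your minimization problem. As written, the claim that ``the reflected path is again a minimizer'' is unjustified.

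\textbf{The fix.} The paper reflects the \emph{tail} $[t_1,T]$ instead. The endpoint is the total collision $0$, which every linear isometry fixes, so the new path still runs from $q_0$ to $0$ with the same action. It is therefore again a minimizer in $\Gamma$, and the corner argument applies to it. Alternatively, you could keep the head reflection and note that, by $O(3)$-invariance, the reflected path minimizes among paths starting at $gq_0$; but you have to say this.

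\textbf{What becomes unnecessary.} Once the reflection is fixed, the whole orientation-constrained set-up can go: the class $\Gamma^+$, the equality $\min_{\Gamma^+}\A=\min_{\Gamma}\A$, and the worry about whether Marchal deformations respect $\det\ge 0$. Simply minimize over $\Gamma$ and use the tail reflection to show $\det q(t)\neq 0$ on $(0,T)$. The sign of $\det$ is then constant by continuity, and it carries over to the limiting tetrahedron as in your Step 3. This is exactly the fallback in your final paragraph, and it is the paper's proof.

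The sentence in Step 2 applying Theorem \ref{thm;planar} to the tail $q|_{[t_1,T)}$ does nothing and should be dropped; that theorem is needed only in Step 3.
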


For the planar three-body problem, we can have a similar result as below. 

\begin{theorem} \label{thm;three-body}
  For the planar three-body problem, given any positive time $T$ and planar configuration $q_0$, there exists at least one triple collision solution $q(t)$, which starts from $q_0$ at $t=0$ and approaches to a triple collision in the shape of a equilateral triangle as $t$ goes to $T$.  

  Moreover for any $t \in (0, T)$, $q(t)$ must be a planar configuration with the same orientation as $q_0$ and so is the orientation of the limiting equilateral triangle at triple collision. 
\end{theorem}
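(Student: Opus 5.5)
We will prove Theorem \ref{thm;three-body} by mirroring the argument for Theorem \ref{thm;mini-coll}. The plan has three steps: a direct-method minimization, classification of the limit shape, and an orientation argument.

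\textbf{Step 1: construct a minimizer.} Fix $T>0$ and a planar (non-collinear, or more generally non-collision) configuration $q_0$. Consider the space $\Omega$ of $H^1$ paths $q:[0,T]\to(\R^2)^3$ with center of mass zero, $q(0)=q_0$ and $q(T)=0$. On $\Omega$ we minimize the action $\A(q)=\int_0^T \ey|\dot q|^2_M+U(q)\,dt$.

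The action is coercive and weakly lower semicontinuous, because the endpoints are fixed. The set $\Omega$ is nonempty with finite action: the straight homothetic path $q(t)=(1-t/T)^{2/3}q_0$ has finite action, since $U\sim(T-t)^{-2/3}$ is integrable. So a minimizer exists.

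By Marchal's theorem (minimizers of the fixed-endpoint problem have no interior collisions) the minimizer $q$ is collision-free on $(0,T)$. Hence it is a genuine solution of Newton's equations there, ending in total collision at $T$.

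\textbf{Step 2: the limit shape is equilateral.} The limiting shape is a planar 3-body CC, so it is either equilateral or one of the Euler collinear shapes. If it were collinear, then Theorem \ref{thm;collinear} would force the whole solution to be collinear. That would make $q_0$ collinear, contradicting the choice of $q_0$. Hence the limit is equilateral. If $q_0$ is allowed to be collinear, we instead need a separate comparison argument at this point; we assume $q_0$ is non-collinear, matching the word ``planar'' in the sense of the paper.

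\textbf{Step 3: orientation is preserved.} Let $\Delta(q)$ denote the signed area of the triangle. Suppose $\Delta(q(t_0))=0$ for some $t_0\in(0,T)$. Then $q(t_0)$ is collinear and collision-free.

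To rule this out, we use reflection across the line containing $q(t_0)$. Define $\tilde q$ to equal $q$ on $[0,t_0]$ and the reflection $Rq$ on $[t_0,T]$. Since $Rq(t_0)=q(t_0)$, the path $\tilde q$ is continuous and lies in $\Omega$, because $R0=0$. It has the same action as $q$, so it is also a minimizer and hence a smooth solution. The two branches agree in position at $t_0$, so uniqueness of ODE solutions forces the velocities to match: $\dot q(t_0)=R\dot q(t_0)$. Thus $\dot q(t_0)$ is tangent to the line, and by uniqueness $q$ is collinear for all time. This contradicts $q_0$ being non-collinear.

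Therefore $\Delta(q(t))$ never vanishes on $(0,T)$, and it has the sign of $\Delta(q_0)$. The normalized shape $q/|q|$ converges to an equilateral triangle whose signed area has the same sign, possibly zero a priori. It cannot be zero, since the limit is an equilateral triangle and not degenerate. So the limiting triangle has the orientation of $q_0$.

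\textbf{Main obstacle.} The hardest step is justifying the absence of collisions for the minimizer, especially partial binary collisions in $(0,T)$. This is where we invoke Marchal's averaging theorem, or the blow-up arguments of Ferrario--Terracini. The reflection trick also needs care at an interior point: we must check that $\tilde q$ is collision-free so that the ODE uniqueness argument applies. This follows because $\tilde q$ is itself a minimizer and Marchal's theorem applies to it.
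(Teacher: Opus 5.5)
Your proposal is correct and follows the route the paper indicates. It mirrors the paper's proof of Theorem \ref{thm;mini-coll}: direct-method minimization, Marchal's lemma, then an isometry fixing $span(q(t_0))$ to rule out degenerate intermediate configurations. It also uses Theorem \ref{thm;collinear} to exclude Euler limit shapes, exactly as the remark after Theorem \ref{thm;three-body} suggests. The only cosmetic difference is in how you show $\dot q(t_0)$ is tangent to the line $span(q(t_0))$: you note that the reflected path $\tilde q$ is itself a minimizer and hence a $C^1$ solution, and it is this regularity, not ODE uniqueness, that matches the one-sided velocities. The paper phrases the same point as a corner at $t_0$ that could be smoothed to lower the action.
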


\begin{remark}
Theorem  \ref{thm;three-body} was given in \cite{Hu11}. However the proof there seems to be incomplete, as in the proof of Lemma 4 in \cite{Hu11}, the author forgot to address the case that the minimizing solution may approach triple collision along one of the Euler configurations. Of course if the initial configuration is planar, this can be easily ruled out by Theorem \ref{thm;collinear} or its well-known version for the planar three-body problem. 
\end{remark}

{\it Sketch  of the proof of theorem \ref{thm;planar}.} McGehee blow-up provides the framework. This blow-up adds a collision manifold as a boundary to the usual N-body phase space and extends the N-body flow to the collision manifold.   Equilibrium points of the extended  flow exist, they all lie on the collision manifold, 
and they correspond in a 2-to-1 manner to CCs.  
The collision manifold is a fiber bundle over the sphere of normalized configurations, and the equilibria lie over the normalized CCs, with precisely two equilibria sitting over each CC, one  representing   solutions  incoming to total collision along that   CC, and the other set, a time-reversed copy of the first, representing solutions exploding out from total collision along that CC.  Incoming  total  collision solutions   lie in the   stable-center manifold of the set of   incoming  equilibria. 
The proof  proceeds from this fact   by analyzing the eigenvalue structure of the linearized flow at a planar or collinear CC  equilibrium. 
 
Central to our proof is  the fact that the ``vertical Hessian'' of the normalized potential ($\hat U$ below) at either a  planar or a collinear 4-body  CC is negative semi-definite.
See definition \ref{def: vert Hess} below.  These facts concerning the vertical Hessian are recorded in Moeckel's Lecture notes \cite{MoeckelCC}, as Prop. 19 on page 37 for a collinear CC,  and as Prop. 20 on page 39 for a planar CC. 
Here  `vertical' refers to a   tangent vector or a variation of the configuartion in which 
all 4 variation vectors are    orthogonal to the plane or  line containing that configuration. Plugging this vertical Hessian information  into a  formula due to Devaney  and generalized by Moeckel   shows that  associated to each negative eigenvalue of the vertical Hessian there are two {\it positive} (unstable) eigenvalues for the linearized flow.
Similarly to each zero eigenvalue for the vertical Hessian correspond to one central and one unstable direction for the linearized flow. 
The associated eigenvectors are  transverse to the  coplanar (collinear) invariant submanifold arising from  the planar (collinear) subproblem. In this way we can account for   enough of the unstable and central directions, of the linearized flow to guarantee that the remaining stable directions lie completely in the tangent space
generated by the   invariant  submanifold of the planar (collinear) subproblem.   It follows that the center-stable manifold  of the equilibrium, viewed within the spatial
4-body problem  is foliated into leaves each of which is   entirely planar  (collinear).   The theorem  now follows   almost immediately. Some technicalities around
bookkeeping for  the center versus  center-stable manifolds of the equilibrium  remain to be dealt with.  These are   dealt with rather simply upon observing that  the central directions associated to  vertical variations are all generated by infinitesimal rotations of the CC.

\vskip .4cm

\section{Recalling Blow-up.} 


We begin by recalling the McGehee variables, and what the N-body 
equations   look like in these variables.
Take    $m_a > 0$  to be the masses where the index $a$ runs from $1$ to $N$. 
Write   $q = (q_1, \ldots, q_N)$, with   $q_a \in \R^d$ 
representing the position of the $a$th variable.   We take the dimension $d$ general because
we will need to  compare phase space dimensions associated to  the spatial ($d=3$), planar ($d=2$)
and collinear ($d = 1$)  four body problems. 
Write  $v = \dot q : = dq/dt$ for the corresponding vector of velocities.  The kinetic energy of a motion is 
  $\frac{1}{2} \langle v , v \rangle  = \frac{1}{2} \sum m_a v_a \cdot v_a$ when 
we use the mass inner product     $\langle q,  y \rangle = \sum m_a q_a \cdot y_a$.
Newton's equations are $\ddot q = \nabla U(q)$
where  $U$ is the negative of the usual potential, where the gradient is with respect to the mass inner product,  and where the total energy
is $H(q, v) = K(v) - U(q)$.    Because of the translation invariance of the potential we can,
as is usual, assume that $\sum m_a q_a = 0$ and that  $\sum m_a v_a = 0$
throughout the motion.  In this way our  phase space of initial conditions, the  $(q, v)$'s 
has dimension $2 d(N-1)$ and is equal to   $\E_0 \times \E_0$ (minus collisions) 
where $\E_0$ is a Euclidean vector space of dimension $d(N-1)$.   

Set $$r = \|q \|: = \sqrt{ \langle q, q \rangle}$$
and $$q = r s, \text{ with }  s \in \S: = \{q:   \|q \| =1 \} \subset \E_0, $$
In English, $\S$ is the unit sphere of $\E_0$ relative to the mass metric.
Define a rescaled velocity $y$ by  $v = r^{-1/2} y$.   With respect to the combined position-velocity scaling
the total energy $H$ becomes homogeneous of degree $-1$: 

\begin{eqnarray*}
H(q,v)   &= & \frac{1}{r} (K(y) - U(s)) \\
  & = & \frac{1}{r} \tilde H(s, y),
\end{eqnarray*}
thus defining the homogenized energy $\tilde H$.  
Now decompose $y$ into a term along $s$ and a term
$w$ orthogonal to $s$:  
$$y = w + \nu s \text{ with } \nu = \langle s, y \rangle.$$ 

The  blown-up variables are  $(r, \nu, s, w)$. 
We have $r, \nu \in \R$ while the pair  $(s,w) \in T \S$
because  $w \perp s$.  
Rescale the    time variable  $t$ of Newton's equations to a new parameter $\tau$
 according to  $\frac{d}{d \tau} = r^{3/2} \frac{d}{dt}$. Write  $^{\prime}$ for $\frac{d}{d \tau}$.
 Write $\tilde H$ for $r H(q, v) = K(y) + U(s)$.
 This change of dependent  and independent variables
 transforms Newton's equations into  
\begin{equation}
\begin{aligned}
r^{\prime}& =  r \nu, \\
\nu^{\prime}& = \tilde H + \frac{1}{2} \|w \|^2 , \\
s^{\prime} & =  w,  \\
w^{\prime} & =     \nabla ^T U (s) - \frac{1}{2} \nu w - \|w \|^2 s .
\end{aligned}
\label{blowup}
\end{equation}
(Compare with  equation (70) page 77 of \cite{MoeckelClass}.) 
In the second of these equations,  the one for $\nu'$, the term  $\tilde H$ is  the homogenized energy introduced above. In   terms 
of the current variables we have
$\tilde H =  \frac{1}{2} (\nu^2 + \| w \|^2 ) - U(s)$. 
 In the last of these equations, the one for $w'$, the symbol   $ \nabla ^T$ denotes the covariant derivative operator on  the sphere $\S$,
 relative to the metric on $\S$ induced by the mass metrix.  
Specifically   $  \nabla ^T   U (s) = \nabla U (s) - \langle s, \nabla U(s) \rangle s $ which we can also write
as $\nabla ^T   U (s) = \nabla U (s) + U(s)   s$ by using Euler's identity for  functions such as our $U$ which are homogeneous of degree $-1$. 
(For a derivation of this $\nu'$ equation, see the penultimate in the series of equations
for $\nu'$ found on p. 93 of  \cite{Mont}.)  
 The collision manifold is given by $r =0$, while
 the usual N-body phase space corresponds to $r > 0$.
 The main point of blow-up is that the ODEs make sense on the collision manifold.  
 
Equilibria $(r,\nu, s, w)$  for  the extended ODE  \eqref{blowup}    satisfy $r = 0$, $w = 0$, 
 and  $\nabla ^T U (s) =0$.  This last equality by itself characterizes
 the CCs  $s = s_0$  which are normalized.  To obtain the value   of $\nu$ at an equilibrium  associated to the  CC $s_0$ we  use
 the fact that the energy $H$ is conserved along  solutions away from  collision.   Take that value of the conserved energy to be a constant
 $h$.  From $\tilde H = r h$ we see that $\tilde H = 0$ at equilibrium or that   $\frac{1}{2} \nu^2 = U(s)$. 
 Now $U > 0$ everywhere so we have two corresponding roots   $\nu = \pm \sqrt{2 U(s_0)}$ 
 which correspond to the 2:1 relation between equilibria and CCs refered to above.  The negative root 
$ \nu_0 =-\sqrt{2 U(s_0)}$ corresponds to solutions incoming    to total collision ($r' < 0$)  and are the
equilibria we are interested in.  {\bf Henceforth we identify a  normalized CC with its incoming equilibrium point:} 
\beq
s_0   \leftrightarrow (0, \nu_0, s_0, 0) \text{ with }   \nu_0 = - \sqrt{2 U(s_0)}.
\Leq{equil}

\section{Linear analysis at Equilibria} 
\subsection{Setting up} 
When we linearize the blown-up flow \eqref{blowup}   at  an equilibrium $s_0$  (see \eqref{equil}) we arrive at   the   linear system : 
\begin{equation}
\begin{aligned}
\delta r^{\prime}& =   \nu_0 \delta r, \\
\delta \nu^{\prime}& = \nu_0 \delta \nu  \\
\delta s^{\prime} & =  \delta w,  \\
\delta w^{\prime} & =   \tilde D \nabla ^T  U (s_0) \delta s - \frac{1}{2} \nu_0 \delta w
\end{aligned}
\label{linear_blowup}
\end{equation}
(Compare equation (78) of \cite{MoeckelClass}.) 
Here  
$(\delta r, \delta \nu, \delta s, \delta w)$ is a tangent vector to the full phase space at the equilibrium and represents a 
small perturbations  away from   equilibrium.  
We  require that $\delta s, \delta w$  both  be 
vectors in $T_{s_0} \S = s_0 ^{\perp}$  since  the linearization  of the   constraints $\langle s, s \rangle = 1$ and
$\langle s ,w \rangle = 0$ at the point  $(s, w) = (s_0, 0)$ yield  the constraints $\langle s_0,  \delta s \rangle = 0$ and
$\langle s_0, \delta w  \rangle = 0$. In arriving at the second equation above  we used  
$d \tilde H (\xi_0)  (\delta r, \delta \nu, \delta s, \delta w) = \nu \delta \nu + w \cdot \delta w -  d U(s) \cdot \delta s $
and that $dU(s_0) \delta s_0 = 0$ since $s_0$ is a $CC$.  
The term $\tilde D   \nabla ^T  U (s) $ requires some explanation and care.
As an operator on $\E_0$ it is $D \nabla U (s_0) + U(s_0) I$.  We then restrict this operator to $s_0 ^{\perp}$.   More intrinsically, any  CC 
  is a critical point of $\hat U: \S \to \R$, the restriction of $U$ to our sphere $\S : = \{ r = 1 \}$.
It follows that   Hessian $d^2 \hat U (s_0)$ of $\hat U$ at our CC $s_0$ is a well-defined symmetric quadratic form on $T_{s_0} \S = s_0 ^{\perp}$.
The restriction of the inner product to $T_{s_0} \S$  is again an inner product on $T_{s_0} \S$ , still denoted $\langle \cdot , \cdot  \rangle$.
We can alternatively define  $\tilde D \nabla ^T  U (s_0) $  as the symmetric linear operator on $T_{s_0} \S$ such that   
$$ d^2 \hat U (s_0) (v, v) = \langle \tilde D \nabla ^T  U (s_0) v, v \rangle \text{ for  all } v \in T_{s_0} \S$$

Write $L$ for the   linear operator defined by  the right hand side of the linearized flow equation \eqref{linear_blowup}.
{\bf The   proofs  of our theorems   will  boil down to an understanding of the spectral properties of $L$
and those of its restrictions to  certain  invariant subspaces.}

\subsection{Spectrum of the linearization}
We read off that $\nu_0$ is an eigenvalue for $L$ with  two-dimensional  eigenspace spanned by the
variation vectors of the form  $(\delta r, \delta \nu, 0, 0)$.  The other eigenspaces for  $L$ lie
in the complementary subspace of variation vectors of the form $(0,0, \delta s, \delta w)$.
This complementary subspace forms a hyperplane in the   the  tangent  space to the collision manifold $r =0$ at our equilibrium point
and  is isomorphic to $s_0 ^{\perp} \oplus s_0 ^{\perp}$.   
The restriction $\hat L$ of $L$ to this complementary subspace   is given by  the matrix 
\[ \hat L = 
\left(
\begin{array}{cc}
  0 & I \\
\tilde D  \nabla ^T  U (s_0) & -\frac{1}{2} \nu_0  
\end{array}
\right)
\]
acting on column vectors  $(\delta s, \delta w)^T$.    
The following lemma on the spectral structure of $\hat L$ is due to Devaney and was clarified by Moeckel.
See Lemma 7.1 page 93 of \cite{MoeckelClass}. 
\begin{lemma}There is a 2:1 correspondence between the eigenvalues and eigenvectors of $\hat L$
acting on $s_0^{\perp} \oplus s_0 ^{\perp}$ and those of the symmetric Hessian operator $\tilde D \nabla ^T U (s_0)$ acting on $s_0^{\perp}$.  Under this correspondence an eigenvector $e$ 
for the Hessian operator having eigenvalue $a$ corresponds to the pair of linearly independent 
eigenvectors $(e, \lambda_+ e), (e, \lambda_- e)$ for $\hat L$ having eigenvectors $\lambda_+$
and $\lambda_-$ where 
\beq
\lambda_{\pm} = -\frac{\nu_0}{4} \pm  \frac{1}{4} \sqrt{ \nu_0 ^2 + 16 a} 
\Leq{evalues}
These eigenvector  pairs  
issuing forth from the  eigendecomposition of the Hessian  
exhaust the eigendecomposition of $\hat L$.
\label{spectral lemma}
\end{lemma}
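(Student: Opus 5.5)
The argument diagonalizes the Hessian operator and reduces $\hat L$ to a family of $2\times 2$ blocks. Write $A = \tilde D \nabla^T U(s_0)$. It is symmetric with respect to the restricted mass inner product on $s_0^\perp$, so by the spectral theorem there is an orthonormal eigenbasis $e_1,\dots,e_n$ of $s_0^\perp$ with $Ae_i = a_i e_i$.

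First, suppose $(\delta s,\delta w)$ is an eigenvector of $\hat L$ with eigenvalue $\lambda$. The first row of $\hat L$ gives $\delta w = \lambda\,\delta s$. Substituting into the second row gives $A\,\delta s - \tfrac12\nu_0\lambda\,\delta s = \lambda^2\delta s$. Since $\delta s\neq 0$ (otherwise $\delta w=0$ too), $\delta s$ is an eigenvector of $A$ with eigenvalue $a=\lambda^2+\tfrac12\nu_0\lambda$.

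Conversely, given $Ae=ae$, the vector $(e,\lambda e)$ is an eigenvector of $\hat L$ exactly when $\lambda$ solves the quadratic $\lambda^2+\tfrac{\nu_0}{2}\lambda - a=0$, that is $2\lambda^2+\nu_0\lambda-2a=0$. The quadratic formula gives
$$\lambda_\pm = \frac{-\nu_0 \pm \sqrt{\nu_0^2+16a}}{4},$$
which is \eqref{evalues}.

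Next comes the exhaustion. The subspace $V_i=\mathrm{span}\{(e_i,0),(0,e_i)\}$ is $\hat L$-invariant. On the basis $(e_i,0),(0,e_i)$ the restriction $\hat L|_{V_i}$ has matrix
$$\begin{pmatrix}0&1\\ a_i&-\nu_0/2\end{pmatrix},$$
whose characteristic polynomial is the quadratic above. Because $s_0^\perp\oplus s_0^\perp=\bigoplus_i V_i$, the spectrum of $\hat L$, with multiplicity, is the union of the pairs $\lambda_\pm(a_i)$. This is the 2:1 correspondence.

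Linear independence of $(e,\lambda_+e)$ and $(e,\lambda_-e)$ holds iff $\lambda_+\neq\lambda_-$, i.e. iff $\nu_0^2+16a\neq 0$. I expect this to be the main obstacle, since the paper does not state the needed inequality; everything else is routine.

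To handle the degeneracy I use two facts. The first is $\nu_0^2=2U(s_0)$. The second is that the CC condition gives $\nabla U(s_0)=-U(s_0)s_0$. I then have to show that $a=-\nu_0^2/16=-U(s_0)/8$ cannot occur as an eigenvalue of $A$. I do not have a proof of this for an arbitrary CC, so it must be addressed explicitly; there are two ways to do so.

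The first way is to restrict attention to the cases used later. For planar or collinear CCs, the vertical eigenvalues satisfy $a\le 0$ and are computed explicitly; rotations give $a=0$. I would check for those cases that $a\neq -U(s_0)/8$.

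The second way is to note that if a double root $\lambda=-\nu_0/4$ does occur, the block $\hat L|_{V_i}$ is a nontrivial Jordan block. It then has a generalized eigenvector in place of a second eigenvector. This does not affect the stable/unstable/center splitting needed later: since $\nu_0<0$, the double root $-\nu_0/4$ is positive, so that block lies in the unstable subspace.

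With either resolution, the final step is to read off signs from \eqref{evalues}. Using $\nu_0<0$, a value $a<0$ with $\nu_0^2+16a\ge 0$ gives two positive eigenvalues, and $a=0$ gives the eigenvalues $0$ and $-\nu_0/2>0$. These sign conclusions are what the subsequent argument uses.
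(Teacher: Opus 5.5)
Your argument is correct and is the standard Devaney--Moeckel computation, which the paper cites (Lemma 7.1 of Moeckel's notes) rather than proves. The $\hat L$-invariant planes spanned by $(e_i,0),(0,e_i)$ carry the companion matrix of $\lambda^2+\frac{\nu_0}{2}\lambda-a_i$, and these give the whole spectrum with multiplicity. When $\nu_0^2+16a_i<0$ you must complexify, but the real part $-\nu_0/4$ is still positive.

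You are also right that the clause ``linearly independent'' fails when $\nu_0^2+16a=0$. However, only your second resolution works. The first cannot, because this degeneracy genuinely occurs for a collinear CC in a vertical direction. Here is a concrete case:
\begin{itemize}
\item In the planar three-body problem, put masses $m,M,m$ at $-1,0,1$ on a line, so $\langle q,q\rangle=I=2m$.
\item Take the vertical bending variation $(1,-2m/M,1)$ times a unit normal to the line. By the $1\leftrightarrow 3$ symmetry it is mass-orthogonal to the rotation mode, hence a Hessian eigenvector.
\item Its eigenvalue is $a=-\frac{7}{2}m^2I^{1/2}$, while $\nu_0^2=2U(s_0)=I^{1/2}(4mM+m^2)$.
\item Hence $\nu_0^2+16a=I^{1/2}m(4M-55m)$, which vanishes at $M=55m/4$. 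This is the $55/4$ threshold familiar from the isosceles problem.
\end{itemize}
At that mass ratio the block is a nontrivial Jordan block. So the lemma must be read with a generalized eigenvector in place of the second eigenvector. As you say, the double eigenvalue $-\nu_0/4>0$ places this block in $E^u$, so the splitting $E^s\oplus E^0\oplus E^u$ used later is unaffected.
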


\begin{remark} [On fixed energy and the homothetic solution]
Fixing the  total  energy    $H(q, v) = h$ isolates the one-dimensional subspace $h \delta r = \nu_0 \delta \nu$
of the $(\delta r, \delta \nu, 0, 0)$-plane,   the $\nu_0$ eigenspace for $L$.
Indeed, use $\tilde H = r H$
to arrive at the  energy constraint $r h = \frac{1}{2} (\nu^2 +  \|w \|^2) - \hat U (s)$, which, upon   linearizing 
 at our equilibrium yields  $h \delta r = \nu_0 \delta \nu$.  This  one-dimensional subspace of the plane  is tangent to the
 incoming homothetic solution associated to $s_0$ and having energy $h$.   
\end{remark}

\subsection{ Symmetries and Zero eigenvalues} \label{ss: symmetry} 
The potential $U: \E_0 \to \R$ is invariant under the action of the rotation group $G$ of $\R^d$,
where an element $g \in G$ acts on $\E_0$ by sending  $q = (q_1, \ldots, q_N) \in \E_0$ to  $g q = (gq_1, \ldots, gq_N)$.   It follows
that $d^2 \hat U (s_0)$ and hence $\tilde D \nabla ^T  U (s_0)$   has a nontrivial  kernel, 
this kernel containing ${\mathfrak g}(s_0)$, the tangent space to the rotational orbit through $s_0$.
Here  ${\mathfrak g}$ denotes the Lie algebra of the rotation group so that
$$\mathfrak g (s_0) = \{ As_0:   A  \text{ a skew symmetric operator on  } \R^d \}.$$
where $As_0 = (A s_{01}, A s_{02}, \ldots, A s_{0N})$.

This kernel represents the zero eigenvalue
$a = 0$ for the Hessian operator  in formula \eqref{evalues} and so yields   two eigenvalues
(formula \eqref{evalues})   for $\hat L$, namely $\lambda_+  = 0$ and $\lambda_- = - \nu_0/2$.
Since we   take $\nu_0 < 0$  we have that  $\lambda_- > 0$.  
\begin{definition} We call the CC $s_0$ nondegenerate if the kernel of $d^2 \hat U (s_0)$
is equal to ${\mathfrak g}(s_0)$. 
\end{definition}

The orbit  $Gs_0$ of a nondegenerate CC is isolated from all other CCs.  
For almost all mass ratios, all CCs for the four-body problem are nondegenerate.
However degenerate CCs do occur at special mass ratios. 

$G$ also acts on phase space according to $g(q, v) = (gq , gv)$ and so
on the blown-up variables by $g(\rho, \nu, s, w) = (\rho, \nu, gs, gw)$.  
When we view $s_0$ as an  incoming equilibrium as in  \eqref{equil}  then its  $G$ orbit
is $G s_0 = \{ (0, \nu_0, gs_0, 0) : g \in G\}$ which has tangent space $\mathfrak g (s_0) = \{  (0, 0, A s_0, 0):  A \in \mathfrak g \}$.
The elements $(0, 0, A s_0, 0)$  correspond to the zero eigenvalue for $L$. It follows 
this   $\mathfrak g (s_0)$  lies in the kernel of $L$ and forms its entire  kernel if and only if
$s_0$ is nondegenerate.

\subsection{ Negative  eigenvalues}  
\label{ss: negevals} 
Negative eigenvalues
for the Hessian lead to unstable eigenvalues for $L$.  For if  $a < 0$ then the square root appearing in \eqref{evalues}
is either pure imaginary or a real number less than $|\nu_0|$.  Either way the real parts
of both $\lambda_{\pm} (a)$ are {\it positive}, since $\nu_0 < 0$.

 \subsection{Invariant subproblems and the key Proposition}    For a configuration $q \in \E_0$
 write $span(q) \subset \R^d$ for the span of the components $q_a \in \R^d$ and
 write $dim(q) = dim(span(q))$.  Suppose  our central
 configuration  $s_0$  satisfies $dim(s_0) = k$ where  $k < d$.  
 Choose orthonormal coordinates for $\R^d$ such that
   $span(s_0) = \R^k$ is  the span of the first k standard basis elements while 
  $\R^{\ell}$ is  the orthogonal subspace to   $\R^k$ within $\R^d$, so
 that $\R^d = \R^k \oplus \R^{\ell}$ and $k + \ell  = d$.   
 
 The condition that $dim(s_0) = k < d$,  singles out an 
 invariant subproblem of our N-body problem, namely the problem in  which all bodies move within our fixed $\R^k = span(s_0)  \subset \R^d$.
 We will refer to the associated invariant submanifold of the full phase space as ``the horizontal subproblem'',
and to  $\R^k$ and $\R^{\ell}$ as being  `horizontal' and `vertical' spaces. 
The horizontal  subproblem and its associated decomposition of motions near $s_0$ will guide our thinking.
 
 \begin{definition} Under the assumption above that
 $span(s_0)= \R^k \subset \R^d$ we call a variation $\delta s = (\delta s_1, \ldots , \delta s_N) \in s_0 ^{\perp}$
 ``vertical'' if  each $\delta s _a \in \R^{\ell} = span(s_0)^{\perp}$ and write $V$ for the vector space of vertical variations.
 We call a variation  `horizontal' if  each $\delta s_a \in \R^k$ and write $Hor$ for the space of horizontal variations. 
 \label{def: vert}
 \end{definition}

\begin{figure} 
\centering
\includegraphics[scale=0.6]{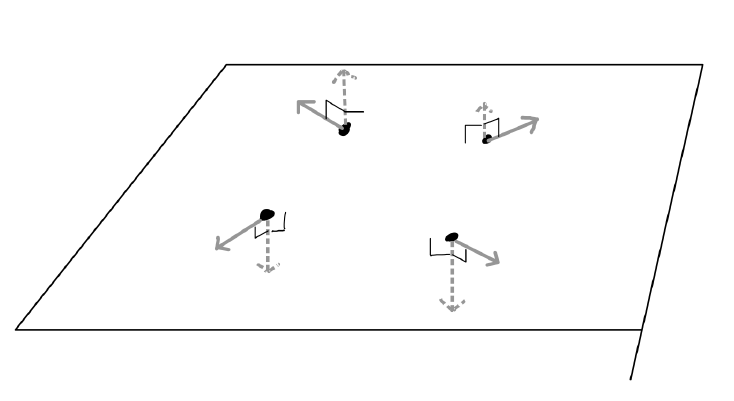}
\caption{Vertical and horizontal variations of a planar 4-body configuration.}
\label{fig;Hor-Vert}
\end{figure}
 
 \begin{remark}
 Our use of horizontal and vertical here is not to be confused with the 
 horizontal and vertical spaces associated to the condition `angular momentum equals zero'
 and connections on principal bundles    frequently used in   other papers by one of us.
 We do not use `$H$' for $Hor$ so as to not confuse this subspace with a Hamiltonian.   
 \end{remark}
 
 \begin{lemma}  The space $s_0 ^{\perp}$ on which our normalized Hessian
 acts is the direct sum of the subspace of vertical variations and the subspace  of  horizontal variations.
 In symbols, 
 \beq
 s_0 ^{\perp} = Hor \oplus V
 \Leq{decompQ}
 Each  subspace $Hor$ and $V$ is   invariant under the action of the Hessian operator.
 \end{lemma}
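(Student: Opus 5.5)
Two things need to be shown: the direct sum decomposition, and the invariance of each summand under the Hessian operator $\tilde D \nabla^T U(s_0) = D\nabla U(s_0) + U(s_0) I$ restricted to $s_0^{\perp}$.

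\emph{The decomposition.} Write a variation $\delta s = (\delta s_1, \ldots, \delta s_N)$ componentwise as $\delta s_a = h_a + v_a$ with $h_a \in \R^k$ and $v_a \in \R^{\ell}$. Setting $h = (h_a)$ and $v = (v_a)$, we need $h, v \in \E_0$ and both orthogonal to $s_0$.
\begin{itemize}
\item The center of mass condition $\sum m_a \delta s_a = 0$ splits into its $\R^k$ and $\R^\ell$ components, giving $\sum m_a h_a = 0$ and $\sum m_a v_a = 0$.
\item Since each $s_{0a} \in \R^k$, we have $\langle v, s_0\rangle = \sum m_a v_a \cdot s_{0a} = 0$ automatically. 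Then $\langle h, s_0 \rangle = \langle \delta s, s_0\rangle = 0$.
\item $Hor \cap V = 0$ trivially, and the two spaces are in fact mass-orthogonal.
\end{itemize}
Hence $s_0^\perp = Hor \oplus V$.

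\emph{Invariance.} The cleanest route is to compute $D\nabla U(s_0)$ explicitly. With $U = \sum_{a<b} m_a m_b / |q_a - q_b|$, we have
$$\nabla U(q)_a = \sum_{b \ne a} m_b \frac{q_b - q_a}{|q_a - q_b|^3}.$$
Differentiating at $q = s_0$ in direction $\delta s$ gives
$$\bigl(D\nabla U(s_0)\delta s\bigr)_a = \sum_{b\neq a} m_b \left[ \frac{\delta s_b - \delta s_a}{r_{ab}^3} - 3\,\frac{(s_{0b}-s_{0a})\bigl((s_{0b}-s_{0a})\cdot(\delta s_b - \delta s_a)\bigr)}{r_{ab}^5} \right],$$
where $r_{ab} = |s_{0a} - s_{0b}|$.
\begin{itemize}
\item If $\delta s$ is vertical, every $\delta s_b - \delta s_a$ lies in $\R^\ell$, while $s_{0b} - s_{0a} \in \R^k$. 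The dot product vanishes, so the second term drops out and the result lies in $\R^\ell$.
\item If $\delta s$ is horizontal, both terms lie in $\R^k$.
\end{itemize}
So $D\nabla U(s_0)$ maps vertical vectors to vertical ones and horizontal vectors to horizontal ones. The same holds for $U(s_0)I$. By the previous step, the images automatically lie in $\E_0 \cap s_0^\perp$; alternatively one can note that the operator is symmetric and kills nothing relevant. Therefore $Hor$ and $V$ are invariant.

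A more conceptual alternative uses the reflection $R = (I_k, -I_\ell)$ of $\R^d$. $R$ is an isometry fixing $s_0$ and preserving $U$, so it commutes with the Hessian operator. Its $+1$ and $-1$ eigenspaces on $s_0^\perp$ are exactly $Hor$ and $V$, which gives invariance with no computation. I would present this argument and keep the formula as a check. The only mild obstacle is making sure $R$ acts on $s_0^\perp$ within $\E_0$ and fixes $s_0$, both of which are immediate.
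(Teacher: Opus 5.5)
The paper states this lemma without proof. It treats it as evident and later uses it, via Lemma~\ref{spectral lemma}, to get the $L$-invariance of $HOR$ and $VERT$. So your argument fills in something the paper omits rather than following a route it takes, and your proof is correct.

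\textbf{Decomposition.} This step is complete. The center-of-mass constraint splits componentwise. The vertical part is automatically mass-orthogonal to $s_0$ because every $s_{0a}\in\R^k$, and hence so is the horizontal part.

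\textbf{Reflection argument.} This is the better of your two invariance proofs. The map $R=(I_k,-I_\ell)$ lies in $O(d)$, and not necessarily in the group $G=SO(d)$ the paper works with. That is harmless, because $U$ depends only on mutual distances. $R$ preserves the mass inner product and $\E_0$, and it fixes $s_0$. It is therefore an isometry of $\S$ fixing $s_0$ and preserving $\hat U$, so it preserves the intrinsic form $d^2\hat U(s_0)$ on $T_{s_0}\S$. It then commutes with the symmetric operator representing that form. Its $\pm 1$ eigenspaces are $Hor$ and $V$, which gives both the direct sum and the invariance in one stroke. This route also works for any potential depending only on mutual distances, whereas the explicit formula uses the Newtonian form.

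\textbf{Step to tighten.} In the explicit computation you say the image of a horizontal variation lies in $s_0^{\perp}$ ``by the previous step'', or because the operator ``kills nothing relevant''. The decomposition step does not give this, and the second phrase is not an argument. What is true is the following. Since $\nabla U$ is homogeneous of degree $-2$, Euler's identity and the CC equation $\nabla U(s_0)=-U(s_0)s_0$ give
\[
D\nabla U(s_0)\,s_0=-2\nabla U(s_0)=2U(s_0)\,s_0 .
\]
Because $D\nabla U(s_0)$ is symmetric for the mass inner product, $s_0^{\perp}$ is invariant.

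Alternatively, read the Hessian operator as the compression of $D\nabla U(s_0)+U(s_0)I$ to $s_0^{\perp}$. The orthogonal projection then only subtracts a multiple of the horizontal vector $s_0$, so horizontality is preserved anyway. With either justification in place of that line, both of your arguments are complete.
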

 
 Initial rough computations suggest that the Hessian of $\hat U$ is negative definite on $V$.
 These computations are wrong, but  they point us  in the right direction.  
 \begin{definition}
The  `vertical Hessian''at a CC $q$ is   the restriction of the Hessian (either as a quadratic form or as a symmetric operator)
of  $\hat U$ 
to $V$.  The   vertical index of $q$ is   the index of the vertical Hessian at $q$.
Let us write $vert ind (q)$ for the vertical index at $q$.
\label{def: vert Hess}
\end{definition}

  The  vertical index is always positive.   
 Indeed,  write $codim(s_0)$ for $d - dim(s_0)$ and assume
 $dim(s_0) < n-1$.  In the  proof of Proposition 20 of \cite{MoeckelCC} it is shown that this
 vertical index is at least  $codim(s_0)$ provided $dim(s_0) < n-1$.
 Our work here concerns the case where the vertical index is, roughly speaking, ``as large as possible''.

 \begin{definition}  We say that a central configuration is `vertically regular' if
 its normalized Hessian $d^2 \hat U$ is negative semidefinite on vertical variations, and if, 
 moreover,  the kernel of restriction of the Hessian operator to vertical variations  
 coincides with the space  ${\mathfrak g} (s_0) \cap V$ of `vertical symmetries'. 
 \end{definition}
 
We can always split  
$V$ up  into  the subspaces of zero, positive, and negative eigenvalues for 
the vertical Hessian, writing this splitting
as 
\beq
V = V^0   \oplus V^{-} \oplus V^{+}.
\Leq{decompV}
We then have that $vert ind (q) = dim(V^{-})$. 
To say that $q$ is vertically regular is the same as saying
that $V^+ = 0$ and $V^0 =  {\frak g} (q) \cap V$.

 The following two propositions are proved by Moeckel \cite{MoeckelCC}.
 \begin{proposition} Every collinear CC is vertically regular.
 \label{propC}
\end{proposition}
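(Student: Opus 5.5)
We compute the vertical Hessian of $\hat U$ directly at a collinear CC $s_0$. Write $s_{0a} = x_a e$ with $e$ a unit vector spanning $span(s_0)$ and $x_a \in \R$, $\sum m_a x_a = 0$, $\sum m_a x_a^2 = 1$. A vertical variation has the form $\delta s_a = u_a$ with $u_a \in \R^{\ell} = e^{\perp}$. Such a variation is automatically orthogonal to $s_0$, and the center of mass condition becomes $\sum m_a u_a = 0$. On $T_{s_0}\S$ the Hessian form is $d^2\hat U(s_0)(v,v) = D^2U(s_0)(v,v) + U(s_0)\langle v,v\rangle$; this matches the operator $D\nabla U(s_0) + U(s_0)I$ recorded above.

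Next we expand $U = \sum_{a<b} m_a m_b/|q_a - q_b|$ to second order in directions perpendicular to $e$. We have $|q_a-q_b|^2 = (x_a-x_b)^2 + |u_a-u_b|^2$. Since the perturbation is orthogonal, there is no first-order term, and the second-order term is $-\tfrac12 m_am_b|u_a-u_b|^2/|x_a-x_b|^3$. The vertical Hessian form is therefore
\[
Q(u) = -\sum_{a<b} \frac{m_a m_b |u_a-u_b|^2}{|x_a-x_b|^3} + U(s_0)\sum_a m_a |u_a|^2 .
\]
This decouples into $\ell$ identical copies, one for each coordinate of $\R^\ell$. So it suffices to study the scalar quadratic form $Q_1(\xi)$ on $\{\xi \in \R^N : \sum m_a\xi_a = 0\}$, with the same formula and $\xi_a$ scalar.

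Now use the CC equation. In scalar form it reads $\sum_{b\ne a} m_b (x_b - x_a)/|x_a-x_b|^3 = -U(s_0) x_a$, since the Lagrange multiplier is $-U(s_0)$ by Euler's identity. Define the symmetric weights $w_{ab} = 1/|x_a-x_b|^3 > 0$. Then the linear map $\xi \mapsto \big(\sum_b m_b w_{ab}(\xi_b-\xi_a)\big)_a$ is a weighted graph Laplacian, which is negative semidefinite with respect to the mass inner product. Its kernel consists of the constants, and the CC equation says that $x$ is an eigenvector with eigenvalue $-U(s_0)$. We have $Q_1(\xi) = \langle \xi, (\mathcal L + U(s_0))\xi\rangle$, where $\mathcal L$ is this Laplacian operator. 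So $Q_1 \le 0$ on mean-zero vectors if and only if $-U(s_0)$ is the \emph{largest} nonzero eigenvalue of $\mathcal L$, that is, the eigenvalue of smallest modulus on the mean-zero space. Moreover the kernel of $Q_1$ then contains $x$, and it equals $span(x)$ exactly when that eigenvalue is simple. The vertical symmetries ${\mathfrak g}(s_0)\cap V$ are precisely $\{x_a f : f \in \R^\ell\}$, which corresponds to $span(x)$ in each coordinate. So vertical regularity reduces to this claim: $x$ spans the eigenspace of the smallest-modulus nonzero eigenvalue of $\mathcal L$.

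The main obstacle is this spectral claim. My plan is a Perron--Frobenius / nodal-domain argument. Order the bodies so that $x_1<\dots<x_N$. Then $x$ is strictly monotone along the line, and so it has exactly one sign change. For the weighted Laplacian, the first nontrivial eigenvector (the Fiedler vector) of $-\mathcal L$ is the minimizer of the Rayleigh quotient. I would argue by the following step: if some eigenvector $\eta$ orthogonal to the constants had eigenvalue $\mu$ with $|\mu| < U(s_0)$ or equal to it, compare $\eta$ with $x$. Because $x$ is monotone and the weights come from the collinear geometry, one can instead use the explicit identity $\sum_{a<b} m_am_b w_{ab}(x_a-x_b)^2 = U(s_0)$, which holds because $|x_a-x_b|^{-1} = w_{ab}(x_a-x_b)^2$. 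One then shows, by a Cauchy--Schwarz estimate, that
\[
\sum_{a<b} m_a m_b w_{ab}(\xi_a-\xi_b)^2 \ \ge\ U(s_0)\sum m_a\xi_a^2
\]
for mean-zero $\xi$, with equality only for $\xi \parallel x$. Concretely, I would write $\xi_a-\xi_b = \sum_{c=a}^{b-1}(\xi_{c+1}-\xi_c)$ and weight the Cauchy--Schwarz by the gaps $x_{c+1}-x_c$. This is the step I expect to require care. If it resists, I would fall back on citing Moeckel's Proposition 19 of \cite{MoeckelCC} directly, as the paper indicates.
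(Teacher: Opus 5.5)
Your reduction is correct. At a collinear CC all of $\mathfrak g(s_0)$ is vertical, and the vertical Hessian splits into $\ell$ copies of $Q_1(\xi)=\langle \xi,(\mathcal L+U(s_0))\xi\rangle$ on mass-mean-zero $\xi\in\R^N$. Vertical regularity is then equivalent to $-U(s_0)$ being the largest eigenvalue of $\mathcal L$ on that subspace and being simple. The paper's own proof is only the citation of Moeckel's Proposition 19, so your fallback coincides with it.

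As a self-contained proof, however, your proposal stops exactly where the content of Proposition 19 begins, and the route you sketch for the spectral claim cannot succeed. Everything you list for it holds for \emph{every} normalized collinear configuration, central or not: the ordering $x_1<\dots<x_N$, telescoping $\xi_b-\xi_a$ over consecutive differences, Cauchy--Schwarz weighted by the gaps, and the identity $\sum_{a<b}m_am_bw_{ab}(x_a-x_b)^2=U$. But the inequality you want says precisely that $x$ minimizes the Rayleigh quotient $R(\xi)=\sum_{a<b}m_am_bw_{ab}(\xi_a-\xi_b)^2/\sum_a m_a\xi_a^2$ over mean-zero $\xi$. Your identity gives $R(x)=U(x)$ for every normalized collinear $x$, so the inequality would force $x$ to be an eigenvector of $\mathcal L$, i.e.\ a CC. It is therefore false for non-central collinear $x$, and no argument that never invokes the CC equation can prove it. The nodal-domain idea does not help either: on a complete graph every mean-zero vector has exactly two nodal domains, so counting them cannot single out $x$.

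The missing idea, which uses the CC equation and the ordering together, is Perron--Frobenius in gap coordinates; this is the argument behind Moeckel's proposition.

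Since $\mathcal L$ kills constants, it descends to $\R^N/\R(1,\dots,1)$, coordinatized by $\delta_c=\xi_{c+1}-\xi_c$ for $c=1,\dots,N-1$. The induced matrix $\tilde A$ has exactly the eigenvalues of $\mathcal L$ on the mean-zero space. A direct computation gives
$\tilde A_{ce}=\sum_{b>e}m_b(w_{c+1,b}-w_{c,b})$ for $e\ge c+1$, and
$\tilde A_{ce}=\sum_{b\le e}m_b(w_{c,b}-w_{c+1,b})$ for $e\le c-1$.
Both are strictly positive, because $w_{ab}=|x_a-x_b|^{-3}$ strictly decreases with distance along the line.

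The CC equation says the gap vector $d_c=x_{c+1}-x_c>0$ satisfies $\tilde A d=-U(s_0)d$. Apply Perron--Frobenius to $\tilde A+cI$ for large $c$: the eigenvalue carrying a positive eigenvector is the one of largest real part, and it is simple. Since $\mathcal L$ is self-adjoint for the mass inner product, all eigenvalues are real. Hence $-U(s_0)$ is the top, simple eigenvalue on the mean-zero space, which is exactly the claim your reduction needs.
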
 

\begin{proof}
This is the assertion of proposition 19 of p. 37 of \cite{MoeckelCC}.  
\end{proof}

\begin{proposition} If $q$ is a CC for the $N$-body problem in $d$-space 
having $dim(q) = k$ and if $(k, d, N) = (N-2, N-1, N)$ then $q$ is vertically regular.
\label{propA}
\end{proposition}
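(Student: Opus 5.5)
The plan is to compute the vertical Hessian explicitly and show that it is (minus) a weighted graph Laplacian of rank at most one. Because $d-k=1$, $V$ consists of variations $\delta s=(\delta_1,\dots,\delta_N)$ with scalar $\delta_a$ along the single vertical axis and $\sum m_a\delta_a=0$.

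\textbf{Step 1: the quadratic form.} Since the $s_{0a}$ are horizontal and the $\delta_a$ vertical, the cross terms vanish:
\[
|s_{0a}-s_{0b}+\epsilon(\delta_a-\delta_b)|^2=r_{ab}^2+\epsilon^2(\delta_a-\delta_b)^2 .
\]
Expanding $U$ to second order and adding the term $U(s_0)\langle\delta,\delta\rangle$ coming from $\tilde D\nabla^T U=D\nabla U+U I$ gives
\[
d^2\hat U(s_0)(\delta,\delta)=-\sum_{a<b}\frac{m_am_b}{r_{ab}^3}(\delta_a-\delta_b)^2+U\sum_a m_a\delta_a^2 .
\]
For centered $\delta$ we have $\sum_a m_a\delta_a^2=\frac1M\sum_{a<b}m_am_b(\delta_a-\delta_b)^2$, where $M=\sum_a m_a$. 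Hence
\[
Q(\delta):=d^2\hat U(s_0)(\delta,\delta)=-\sum_{a<b}w_{ab}(\delta_a-\delta_b)^2,\qquad w_{ab}=m_am_b c_{ab},\quad c_{ab}=\frac1{r_{ab}^3}-\frac UM .
\]
Since $Q$ depends only on differences, this is minus the quadratic form of a weighted Laplacian $\mathcal L$ on $\R^N$.

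\textbf{Step 2: the kernel.} The CC equation $\nabla U(s_0)=-U(s_0)s_0$, together with $\sum_b m_bs_{0b}=0$, can be rewritten as
\[
\sum_b m_b\,c_{ab}\,(s_{0b}-s_{0a})=0\quad\text{for each } a .
\]
Thus each of the $k$ horizontal coordinate functions $a\mapsto x^j_a$ lies in $\ker\mathcal L$, and so does the constant vector. The $N$ bodies are centered and span $\R^{N-2}$, so they are affinely independent in the sense that these $k+1=N-1$ vectors are linearly independent in $\R^N$. Hence $\mathrm{rank}\,\mathcal L\le 1$.

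The coordinate vectors $\delta_a=x^j_a$ are exactly the vertical variations $As_0$ produced by the infinitesimal rotation $A$ mixing horizontal axis $j$ with the vertical axis. Therefore $\mathfrak g(s_0)\cap V$ equals the span of the $x^j$, which has dimension $k=\dim V-1$. Consequently $V=(\mathfrak g(s_0)\cap V)\oplus\R\ell$ for one vector $\ell$, and $Q=\sigma(\ell\cdot\delta)^2$ up to normalization. Vertical regularity is then equivalent to $\sigma<0$, that is, to $\mathcal L\neq0$ having a positive nonzero eigenvalue.

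\textbf{Step 3: the sign (main obstacle).} First I would show $\mathcal L\neq 0$, i.e. that the $c_{ab}$ are not all zero. Because $\sum_{a<b}m_am_br_{ab}^2=M$ (normalization $\|s_0\|=1$), we get
\[
\frac UM\le \frac{r_{\min}^{-1}\sum m_am_b}{r_{\min}^{2}\sum m_am_b}=\frac1{r_{\min}^{3}} ,
\]
with equality only if all $r_{ab}$ are equal. That would make the configuration a regular simplex, impossible in dimension $N-2$. Hence $c_{ab}>0$ on a shortest edge, and symmetrically $c_{ab}<0$ on a longest edge. For the sign, write the rank-one Laplacian as
\[
w_{ab}=-\sigma'\ell_a\ell_b\ \ (a\neq b),\qquad \sum_a\ell_a=0\ \text{(row sums vanish)},
\]
so that $Q=-\sigma'(\sum_a\ell_a\delta_a)^2$ and we need $\sigma'>0$. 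The natural test is $\delta=e_a$ (differences are unchanged by centering), which gives
\[
Q(e_a)=-m_a\sum_b m_bc_{ab}=-\sigma'\ell_a^2 .
\]
So it suffices to find a single body $a$ with $\sum_bm_b\bigl(r_{ab}^{-3}-U/M\bigr)>0$ and $\ell_a\ne0$. I would try a trace argument first: sum $Q(e_a)/m_a$ over all $a$ and show the result is negative, using the normalization identities above together with the dotted CC equations $\sum_bm_bc_{ab}(s_{0a}-s_{0b})\cdot s_{0a}=0$. If the trace argument fails, I would instead use the sign pattern: pairs with $\ell_a\ell_b$ of one sign have $c_{ab}$ of sign $-\sigma'$. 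Combined with $\sum\ell_a=0$ and the location of the shortest and longest edges, this should force $\sigma'>0$. This sign determination is the step I expect to require the real work.
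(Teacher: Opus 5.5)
\textbf{There is a genuine gap, in Step 3.} Your Steps 1 and 2 are correct. They amount to the paper's dimension count: the coordinate vectors $x^j$ are the variations $A_i(q)$ with $A_i=e_i\wedge e_d$. So $\dim(\mathfrak g(s_0)\cap V)\ge N-2=\dim V-1$, and the vertical Hessian has at most one nonzero eigenvalue.

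Everything then hinges on that eigenvalue being negative, i.e.\ on finding one vertical variation with $Q<0$, and you leave this unproved. The paper does not prove it from scratch either: it takes $\mathrm{vertind}(q)\ge d-k=1$ from Moeckel's proof of his Prop.~20. Your proposal supplies neither that citation nor a substitute, and neither route you sketch closes the step.
\begin{itemize}
\item Summing $Q(e_a)/m_a$ gives $-\sum_{a<b}(m_a+m_b)\bigl(r_{ab}^{-3}-U/M\bigr)$. Its weights $m_a+m_b$ do not match the weights $m_am_b$ appearing in $U$ and in $\sum_{a<b}m_am_br_{ab}^2=M$. The normalization identities alone cannot give its sign: the expression is positive for some normalized non-central configurations, e.g.\ a body of mass $0.01$ at distance $3$ from two unit masses at mutual distance $1$. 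You have not said how the CC equations would repair this.
\item The sign-pattern route yields no contradiction. If $\sigma'<0$, the shortest edge just joins two bodies with $\ell$ of equal sign and the longest edge two bodies with opposite signs, which is consistent with $\sum_a\ell_a=0$.
\end{itemize}

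\textbf{A short fix.} Drop the factor $1/m_a$. The unweighted sum is
\[
\sum_a Q(e_a)=-2\sum_{a<b}m_am_b\Bigl(\frac{1}{r_{ab}^3}-\frac{U}{M}\Bigr).
\]
Put $p_{ab}=m_am_b/\sum_{c<d}m_cm_d$. Then $U/M=\bigl(\sum p_{ab}r_{ab}^{-1}\bigr)/\bigl(\sum p_{ab}r_{ab}^{2}\bigr)$. Negativity of the sum is therefore
\[
\sum p_{ab}\,r_{ab}^{-3}r_{ab}^{2}<\Bigl(\sum p_{ab}r_{ab}^{-3}\Bigr)\Bigl(\sum p_{ab}r_{ab}^{2}\Bigr).
\]
This is the weighted Chebyshev sum inequality for the oppositely ordered quantities $r_{ab}^{-3}$ and $r_{ab}^{2}$. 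It is strict unless all $r_{ab}$ are equal, which you already excluded.

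Hence some $Q(e_a)=-\sigma'\ell_a^2$ is negative, so $\sigma'>0$. This also makes your separate shortest-edge argument for $\mathcal L\neq0$ unnecessary.

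With this inserted, your argument is a complete, self-contained proof. It replaces the paper's appeal to Moeckel with an explicit Laplacian computation. Since the Chebyshev step does not use $k=N-2$, running it in each of the $d-k$ vertical directions also recovers the general bound $\mathrm{vertind}\ge d-k$ that the paper imports.
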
 

\begin{proof} Proposition 20, p. of \cite{MoeckelCC} asserts that for general $d, k, N$
with $N > d > k$ 
we have  $ind(q) \ge d-k$.     A reading of the proof of proposition 20 
shows that   Moeckel actually proves the stronger inequality:
$vertind(q)  \ge d-k$.   Under
the hypothesis of  proposition \ref{propA} we have   $d- k = 1$
while $dim(V) = N-1 = d$.  (The dimension is $N-1$ rather than $N$
due to the condition $\sum m_a \delta s_a = 0$.) 
 The proof of the proposition will be  completed upon proving 
 \beq
dim({\frak g} (q) \cap V) \ge  d- 1.
\Leq{vertsymmdim}

To see that equality \eqref{vertsymmdim} completes
the proof use the decomposition \eqref{decompV}  
and recall from   section \ref{ss: symmetry}  that   ${\frak g} (q) \cap V \subset V^0$.
By $vert ind (q) \ge 1$ we have    $dim(V^{-})  \ge 1$.
Thus   \eqref{vertsymmdim} combined with the observation that
$(d-1) + 1 = d = dim(V)$   proves that $V^0 =  {\frak g} (q) \cap V$
and   $V^+ = 0$ which are the conditions of $q$ being vertically regular. 

To prove \eqref{vertsymmdim} recall that for $v, w \in \R^d$
we get  $v \wedge w \in \frak g$, this being  the skew-symmetric linear operator on $\R^d$  which sends $x \in \R^d$ to $(v\wedge w) (x): = \langle v , x \rangle w - \langle w, x \rangle v$.
Now let $e_d$ be  the unit normal to the hyperplane
$span(q) \subset \R^d$ and let  $e_i, i = 1, \ldots d-1$
be an orthonormal  basis for this hyperplane.  Consider the $d-1$ elements $A_i$ of ${\frak g}$ given 
$A_i = e_i \wedge e_d$, $i =1, \ldots d-1$.   Then
$A_i (q)$ has $a$th component $A_i (q_a)=   \langle e_i, q_a \rangle e_d$ for $a = 1, \ldots , N$
so that each $A_i (q)$
  is a vertical variation.  Since $span(q) = span(e_1, \ldots , e_{d-1})$
the matrix $\langle e_i, q_a \rangle$ has rank $d-1$ from which it follows
that the collection    $A_i (q)$   in $\R^d \otimes \R^N$, $i =1, \ldots , d-1$ are linearly independent and thus 
span a space of dimension $d-1$ within ${\frak g} (q) \cap V$.  Since each  $A_i (q) \in {\frak g}(q_0) \cap V$ we have shown that  $dim( {\frak g} (q) \cap V) \ge d-1$,
as required.
 
\end{proof} 

 \begin{remark}
 CCs with $dim(q) < d$ are not always vertically regular. 
 See \cite{Moeckelbifurc} or \cite{MoeckelSimo} for examples of planar CCs for $N> 4$ bodies  in 3-space which are not vertically regular.
 For such  CCs the vertical Hessian has   positive   as well as negative directions. 
\end{remark} 

 \section{Proof of   theorems via a  crucial proposition}
 
 Write $G = SO(d)$ for the rotation group as it acts on configurations and on velocities
 and hence on the full phase space.   If $s_0$ is a CC then   $G s_0$  is a $G$-orbit of incoming
 equilibria lying on the collision manifold.  (See  \eqref{equil}.)  Solutions having their  $\omega$-limit
 set contained in  this orbit correspond to  those total collision solutions whose incoming shape 
 agrees with that of  $s_0$.  (We leave open the problem of `infinite spin' in this general setting.)
 
{\bf Proof of the main theorems.} 
Observe that proposition \ref{propA} applies to the case $(k, d, N) = (2, 3, 4)$
 showing that the planar 4-body CCs are vertically regular within the spatial 4-body problem. 
 It thus follows that  Theorems   1.1 and 1.2  follow immediately from propositions \ref{propA}, \ref{propC}
 and the crucial:

 \begin{proposition}  
 Suppose that $s_0$ is an isolated  CC for the N-body problem in d-dimensional Euclidean space 
 with   $dim(s_0) = k  < d$ and with  $s_0$  being 
 vertically regular.  Then   any total collision solution $q(t)$  whose $\omega$-limit set after  blow-up  lies in the equilibrium orbit 
  $G s_0$ has  the property that all  the bodies $q_a (t) \in \R^d$ represented by the solution
  lie in a fixed k-dimensional subspace of $\R^d$ throughout the motion.
  \label{prop: main} 
 \end{proposition}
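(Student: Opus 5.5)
The plan is to work in the blown-up coordinates \eqref{blowup} near the incoming equilibrium $s_0$. Let $\mathcal P\subset$ phase space denote the invariant submanifold of the horizontal subproblem, i.e.\ all $(r,\nu,s,w)$ with every $s_a,w_a\in\R^k$, together with its $G$-translates $g\mathcal P$. We must show that a solution whose $\omega$-limit set lies in $Gs_0$ actually lies in some $g\mathcal P$.

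\textbf{Step 1: linear bookkeeping at $s_0$.} Decompose $s_0^\perp=Hor\oplus V$ using \eqref{decompQ}, and split further $V=V^0\oplus V^-$, with $V^0=\mathfrak g(s_0)\cap V$ by vertical regularity. Lemma \ref{spectral lemma} then sorts the eigenvectors of $\hat L$ coming from $V$. Each $e\in V^-$ produces two eigenvalues with positive real part (subsection \ref{ss: negevals}). Each $e\in V^0$ produces $\lambda_+=0$, with eigenvector $(e,0)$ tangent to the rotation orbit, and $\lambda_-=-\nu_0/2>0$. Consequently every stable or center direction of $L$ transverse to $T\mathcal P$ is a rotation direction $(0,0,As_0,0)$ with $As_0\in V$. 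The center-stable subspace $E^{cs}$ of $L$ is therefore contained in $T_{s_0}\mathcal P+\mathfrak g(s_0)$, which is exactly the tangent space at $s_0$ of $G\mathcal P=\bigcup_g g\mathcal P$.

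\textbf{Step 2: nonlinear statement.} Since $s_0$ is isolated, the equilibria near $s_0$ are the manifold $Gs_0$, and it is normally hyperbolic up to the center directions coming from $Hor$. To avoid general center-manifold theory I will reduce by symmetry first. Near $s_0$ I choose a local slice $\Sigma$ for the $G$-action, transverse to $\mathfrak g(s_0)$ and containing $\mathcal P$ near $s_0$. The slice is taken to be $G_{\mathcal P}$-invariant, where $G_{\mathcal P}$ is the stabilizer of $\R^k$. The reduced flow on $\Sigma$ has $s_0$ as an equilibrium whose vertical normal directions, those in $V^-$ and the $\lambda_-$ directions from $V^0$, are all strictly unstable. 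So $\mathcal P$ is an invariant submanifold that is normally repelling in forward time. Any trajectory converging to $s_0$ must then have the distance $\delta$ from its $\Sigma$-projection to $\mathcal P$ satisfy a Lyapunov-type inequality $\delta'\ge c\,\delta$ near $s_0$. This forces $\delta\equiv0$, since otherwise $\delta$ grows and the trajectory could not converge. The inequality comes from Taylor-expanding the vertical part of the vector field, which vanishes on $\mathcal P$ by invariance, with linear part having spectrum in $\{\Re\lambda>0\}$. Equivalently, the center-stable manifold is contained in $\mathcal P$.

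\textbf{Step 3: conclude.} Let $\gamma$ be a collision solution whose $\omega$-limit set lies in $Gs_0$. Its tail lies in a neighborhood of $Gs_0$. Using $G$-equivariance, I translate pieces of the tail to neighborhoods of $s_0$ and apply Step 2 to show that at each late time the state lies in some $g\mathcal P$. Membership in $g\mathcal P$ is an open-and-closed invariant property along the flow, and because the $r>0$ flow is analytic, it propagates backward to all times. Thus every $q_a(t)$ lies in the fixed subspace $g\R^k$.

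The main obstacle is Step 2 in the presence of the possible drift or ``infinite spin'' along $Gs_0$. Without knowing that the solution converges to a single point of $Gs_0$, the reduction to a slice must be done uniformly along the compact orbit. I plan to handle this by working on the quotient by $G$, where $Gs_0$ becomes a single hyperbolic-in-vertical-directions equilibrium and $G\mathcal P/G$ is a smooth invariant submanifold near it. The horizontal center directions, including possible horizontal degeneracy, stay inside $\mathcal P$ and cause no trouble, because the repelling estimate only concerns the normal component.
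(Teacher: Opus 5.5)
Your argument is correct in substance. After the linear algebra, which you share with the paper, it takes a different route.

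\textbf{Comparison with the paper.} Your Step 1 is the paper's computation leading to \eqref{starlin}. Vertical regularity gives $E^s_{vert}=0$ and $E^0_{vert}=\mathfrak g(s_0)\cap VERT$, hence $E^s\oplus E^0\subset HOR+\mathfrak g(s_0)=T_{s_0}(G\mathcal P)$.

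The paper then stays within stable-center manifold theory. It builds $W^{sc}_{hor}$ for the horizontal subproblem and argues through tangent spaces that $G(W^{sc}_{hor})$ serves as a stable-center manifold of $Gs_0$. It then quotes the fact that every solution with $\omega$-limit in $Gs_0$ eventually lies in any stable-center manifold.

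You never construct a stable-center manifold. You use the larger invariant manifold $G\mathcal P$. You need only that $L$ induces on $T_{s_0}FULL/T_{s_0}(G\mathcal P)\cong VERT/(\mathfrak g(s_0)\cap VERT)$ an operator with spectrum in $\{\Re\lambda>0\}$. This isomorphism uses the splitting of $\mathfrak g(s_0)$ from the appendix, and the spectrum consists of $-\nu_0/2$ from $V^0$ and the pairs from $V^-$. Since the normal component of the field vanishes on $G\mathcal P$, it has the form $B(x,y)y$. An adapted norm then makes the normal distance grow exponentially, so convergence forces it to be zero.

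This is more elementary and self-contained. It sidesteps the non-uniqueness of stable-center manifolds, which the paper handles only through its ``enough to check tangent spaces'' reduction. Horizontal center and unstable directions play no role. In fact, the property the paper quotes (forward-trapped orbits lie in every locally invariant manifold tangent to $E^s\oplus E^0$) is itself proved by essentially your estimate.

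\textbf{Details to repair.} None of these is fatal.
\begin{itemize}
\item A slice $\Sigma$ at $s_0$ cannot contain $\mathcal P$ when $k\ge 2$. $T_{s_0}\mathcal P=HOR$ contains the in-plane rotation directions $\mathfrak g(s_0)\cap HOR\neq 0$, to which $\Sigma$ must be transverse. The invariant submanifold of the reduced flow is $G\mathcal P\cap\Sigma$. Likewise, the stable-center manifold lies in $G\mathcal P$, not in $\mathcal P$.
\item A slice can be made invariant only under the isotropy group $G_{s_0}$, not under the stabilizer of $\R^k$, which moves $s_0$.
\item For collinear $s_0$ with $d\ge 3$ this isotropy group ($\cong SO(d-1)$) is nontrivial. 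So the quotient by $G$ is not a manifold near $Gs_0$, and you should work $G_{s_0}$-equivariantly on $\Sigma$.
\item Simpler still, drop the slice. Average an adapted inner product for the normal linearization at $s_0$ over $G_{s_0}$, which commutes with it. Transport it by $G$ to an invariant norm on the normal bundle of $G\mathcal P$ along the compact orbit $Gs_0$. The vector field is small near $Gs_0$ because every point there is an equilibrium. Hence $(|y|^2)'\ge c\,|y|^2$ holds on a whole neighbourhood of $Gs_0$, which removes the infinite-spin worry directly.
\item Step 3 needs no open--closed or analyticity argument. Each $g\mathcal P$ is flow-invariant, so a state lying in $g\mathcal P$ at one instant lies there for all time.
\end{itemize}
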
 
   
The proof of this proposition is given after the next section.

\section{Preliminaries to the proof of \ref{prop: main} }

\subsection{More linear algebra} We develop
   needed  linear algebra around
 the horizontal-vertical splitting  $\R^d = \R^k \oplus \R^{\ell}$.
 Write $FULL$ for the full blown-up phase space and
 $PLANAR \subset FULL$ for the invariant submanifold
 associated to the problem in which all bodies lie in $\R^k \subset \R^d$.
 The  horizontal-vertical splitting   induces the splitting  
  $$ T_{s_0} FULL = HOR \oplus VERT, $$ 
 where
 $$T_{s_0} PLANAR = HOR$$
 and where $HOR$ and $VERT$ are 
 $L$-invariant subspaces, also referred to as   `horizontal' and `vertical''.
 (One computes that   $dim(FULL) = 2d(N-1), dim(HOR) = 2k(N-1)$ and $dim(VERT) = 2 \ell(N-1)$.)
 We have the representation 
 $$T_{s_0} FULL = \R^2 \oplus s_0 ^{\perp} \oplus s_0 ^{\perp}$$
 with elements written $(\delta r, \delta \nu, \delta s, \delta w)$  and refered to as `variations' as earlier.
$HOR$, 
 consists of  those variations for which   each component
 $\delta s_a, \delta w_a$ of $\delta s$ and $\delta w$ lies in  $\R^k$ while  $\delta r, \delta \nu$ are free to be any numbers.  
 Equivalently,  
 $$HOR = \R^2 \oplus Hor \oplus Hor $$   
 The vertical summand, $VERT$, corresponds to those 
 variations for which  $\delta r = \delta \nu =0$
 for which   each  component
 $\delta s_a, \delta w_a$ of $\delta s$ and $\delta w$ lies  in the vertical space  $\R^{\ell}$.  
Equivalently
$$VERT = 0 \oplus V \oplus V \subset \R^2 \oplus s_0 ^{\perp} \oplus s_0 ^{\perp}.$$

Our eigenvalue-eigenspace structure, as described in lemma \ref{spectral lemma} above, establishes the $L$- invariance
of $HOR$ and $VERT$.

\subsection{Center-Stable Manifold recollections}
\label{ss: sc} 
Our proof relies on the notion of a center-stable manifold for a set of equilibria and properties of
this manifold.

Let $A \subset M $ be a compact submanifold of equilibria for the smooth vector field $Y$ on the  manifold $M$.
(We allow for $M$   to have a boundary.) 
For each $a \in A$ form the usual splitting $T_a M = E^0 (a) \oplus E^s (a) \oplus E^u (a)$
into the eigenspaces for the linearization at $a$ of $Y$  which have real part
zero, negative and positive.   Assume this splitting defines smooth vector bundles
over $A$.   By a ``stable-center manifold of $A$''
we   mean an immersed submanifold $W =W^{sc} \subset M$ which is positively invariant under
the positive-time flow $\Phi_t,  t > 0$ of $Y$ and is   tangent to the subbundle $E^0 \oplus E^s$ of $TM$
along $A$.     Kelley \cite{Kelley} proved    a stable-center manifold
exists.  Stable-center  manifolds are   typically {\bf not}  unique.    

An inspection of 
Kelley's proofs or of the more recent proofs  shows that  every stable-center manifold $W$ of $A$ enjoys the
following property. If the $\omega$-limit set of a  point $x \in M$
is contained in $A$ then, for all $T >0 $ sufficiently large,  
$\Phi_T (x)$ lies in $W$.   Here $(x, t) \mapsto \Phi_t (x)$
denotes the flow of $Y$.  In other words, the only roads in to $A$ are along stable-center manifolds of $A$. 
This last fact makes the uniqueness of the stable-center manifold's irrelevant for our
purposes.  For more on these facts regarding the stable-center manifold see \cite{Kelley}, \cite{Robinson}, and
\cite{Bressan} 

In our situation we will  take $A = G s_0$ to be the orbit of an incoming CC $s_0$,
and $M = FULL$ the full phase space after blow-up.  We take $Y$ to be the Newtonian
flow, after blow-up.

 \section{Proof of the  Crucial Proposition, proposition \ref{prop: main}} 
 \begin{proof}

 Write $W^{sc} \subset FULL$ for a stable-center manifold of the orbit $Gs_0$ of our incoming equilibrium $s_0$.
 Any total collision solution tending to   $Gs_0$ must lie in $W^{sc}$
 according to the subsection \ref{ss: sc} immediately preceding us.   We may assume that $W^{sc}$ is $G$-invariant:
 $GW^{sc} = W^{sc}$.  This is because the   $G$-action is neutral,
 meaning that the tangent space to the  orbit $Gs_0 = \{(0, \nu_0, g s_0, 0): g \in G \}$ of our incoming  equilibrium $(0, \nu_0, s_0, 0)$
lies in the kernel of the linearized flow operator $L(s)$ at each $s \in G s_0$.
  
Write $W^{sc}_{hor} (s_0) \subset PLANAR$ for the stable-center manifold of the orbit $Gs_0 \cap PLANAR = SO(k) s_0$
within $PLANAR$. Recall that $PLANAR$ denotes the subproblem in which  all bodies lie in $\R^k \subset \R^d$.  (Here $SO(k) \subset G$ consists of all rotations leaving $s_0$ fixed.  It acts via $SO(k)$ on $\R^k$.)
We may  take  $W^{sc}_{hor} = W^{sc} \cap   PLANAR$       To prove the proposition it is then enough to prove that 
  \beq 
  W^{sc} = G (W^{sc}_{hor}).
  \Leq{star}
  For the $G$ action takes solutions to solutions and $W^{sc}_{hor} $ is
  invariant under the Hamiltonian  flow. It follows that  for $g \in G$ we have that  $g(W^{sc}_{hor}) = W^{sc}_{hor} (gs_0)$ which is the  stable-center manifold
  associated to the invariant  subproblem $gPLANAR $   in which  all positions $q_a$ and velocities $v_a$ of all   bodies  lie in 
  in the k-plane $g \R^k$.  In other words, $g (W^{sc}_{hor} (s_0)) = W^{sc} \cap g PLANAR$.
So by establishing  \eqref{star} we will have established that any  solution curve lying in $W^{sc}$ lies
in some $W^{sc} \cap g (PLANAR)$ and so corresponds to a solution all of whose bodies lie in a  $g \R^k$ for some $g \in G$. 

To establish equality \eqref{star}, it will be enough to establish the equality  at the level of tangent spaces through $s_0$.
Write $E^s , E^0$, and $E^u$ for the eigenspaces of
$L = L(s_0)$  whose eigenvalues
 have real parts which are negative, zero, and positive. 
 Establishing the equality at the tangent space level  is enough because
 $W^{sc}$ can be characterized as  a flow-invariant submanifold passing  through $s_0$ whose tangent space at $s_0$ 
 satisfies $T_{s_0} W^{sc} = E^s \oplus E^0$.  (See \cite{Robinson}.)    
 Similarly, let
  $L_{hor}$  denote  the restriction of $L$ to $T_{s_0} HOR$ and write
  $E^s _{hor} , E^0_{hor}$ and $E^u _{hor}$ for 
 the eigenspaces of
$L_{hor}$ whose eigenvalues
 have real parts which  are negative, zero, and positive.  $W^{sc}_{hor}$
 can be characterized as being a flow-invariant submanifold  of $HOR$ through $s_0$
satisfying  $T_{s_0} W^{sc} _{hor} =  E^s _{hor}  \oplus E^0_{hor}$. 
Now $G (W^{sc}_{hor})$ is also a  flow-invariant submanifold through $s_0$, so
to establish  \eqref{star} we need only establish that
\beq
E^s \oplus E^0 = E^s _{hor} \oplus E^0 _{hor} + \mathfrak g (s_0),
\Leq{starlin}
since the   tangent space to 
$G (W^{sc}_{hor})$ at $s_0$ is $T_{s_0} W^{sc} _{hor} + \mathfrak g (s_0)$, i.e.
the left-hand side of \eqref{starlin}.  

We have 
\beq
E^s \oplus E^0 = E^s _{hor} \oplus E^0 _{hor} \oplus E^0 _{vert} \oplus E^s_{vert}
\Leq{stablesums} 
where $E^i_{hor} = E^i \cap HOR$ for $i = s, 0, u$ and $E^i_{vert} = E^i \cap VERT$ for $i = s, 0, u$.
Use  the correspondence between eigenvalues of the Hessian and of $L$
as described by \eqref{evalues} and subsections  \ref{ss: symmetry} and \ref{ss: negevals} to see that the   condition of being `vertically regular' is precisely the condition that
$$E^s_{vert} = 0 \text{ and }   E^0_{vert} = ({\mathfrak g} (s_0) \cap VERT).$$
Consider the subspaces   
 $S = \mathfrak g(s_0)$,   $A = E^0 _{hor},  B = E^0_{vert}$.  By \ref{ss: symmetry} we have that $S \subset A \oplus B$
 and by vertical regularity we have that $B = S \cap B$. 
 Now apply the basic linear algebra fact that
if we have  subspaces $S, A, B$ with  $S \subset A \oplus B$ and  $B = S \cap B$ then $A+S = A \oplus B$.  
 Since $E^0_{hor} \oplus E^0_{vert} = E^0$  it  follows that $E^0_{hor} + \mathfrak g (s_0) = E^0$.  Since $E^s _{vert} = 0$
 we conclude that   $E^s_{hor} \oplus E^0_{hor} + \mathfrak g (s_0) = E^s \oplus E^0$, which is the  equation
 \eqref{starlin}   we needed to verify.
 
\end{proof}
 
 \begin{remark}  In regards to the
 last paragraph of our proof above it
 is interesting that, as subspaces of $T_{s_0} FULL$,  we have the equality ${\mathfrak g} (s_0) = ({\mathfrak g} (s_0) \cap HOR) \oplus({\mathfrak g} (s_0)\cap VERT)$,
 or, what is the same thing,  as subspaces of the tangent space to configuration
 space at $s_0$ we have the equality ${\mathfrak g} (s_0) = ({\mathfrak g} (s_0) \cap Hor) \oplus({\mathfrak g} (s_0)\cap V)$.  See the final appendix below for a proof of either equality.  
  \end{remark} 

\section{Proof of Theorem \ref{thm;mini-coll}} \label{sec;mini-coll}

\begin{proof}

Given any initial configuration $q_0\ne 0$ and time $T>0$, by a standard argument of the direct method of calculus of variation, there exist a path $q: [0, T] \to \E_0$, which is a global minimizer of the following action functional  
$$ \A(\gm; 0, T) = \int_{0}^{T} \ey \langle \dot{\gm}, \dot{\gm} \rangle + U(\gm) \,dt $$ 
among all Sobolev paths starting from $q_0$ at $t=0$ and ending at a total collision at $t=T$. By Marchal's lemma(see \cite{C02}), $q(t)$ is collision-free, for all $t \in (0, T)$. Hence it is a total collision solution. 

For the rest of the theorem it is enough to prove $q(t)$ must be a spatial configuration, for all $t \in (0, T)$. By a contradiction argument let's assume there is a $t_0 \in (0, T)$, such that $q(t_0)$ is either a planar or collinear configuration. Then $V=span(q(t_0))$ is either a 1 or 2-dimensional linear subspace in $\R^3$.  

Let $g_V$ be an isometry of $\R^3$ with $V$ fixed under the action of $g_V$. We can define a new path $\tilde{q}$ with the same end points as below 
$$ \tilde{q}(t) = \begin{cases}
q(t), \; & \text{ when } t \in [0, t_0];\\
g_V(q(t))= (g_V(q_1(t)), \dots, g_V(q_4(t))), \; & \text{ when } t \in [t_0, T]. 
\end{cases}  
$$
Since $g_V$ is an isometry, $\A(\tilde{q}; 0, T) = \A(q; 0, T)$. 

Now notice that $span(\dot{q}(t_0)) \subset V$ can not hold, otherwise all masses must belong to $V$ at every instant. This means the new path $\tilde{q}$ can not be smooth at $t=t_0$, and by make a small enough local deformation near $\tilde{q}(t_0)$ we can get another path with strictly smaller action value. However this is a contradiction to the fact that $q$ is global minimizer. 
\end{proof}

 \section{Appendices}
 \subsection{A note on angular momentum.}  Every total collision solution has  angular momentum zero.
 For this reason it is traditional to impose the condition that the angular momentum is zero   when constructing  the collision manifold.
 We have not done so.  If we had done so we would be forced
 to work in the context of   singular varieties when struggling
 with the proof of theorem 1.  The zero-angular momentum subvariety for the
 spatial 4-body problem has a conical singularity along phase points of the  form  $(q,v) = (q, 0)$ where $q$ is a  collinear configuration.
 So our  analysis is
  simpler when done without imposing angular momentum zero.  
 
 An additional advantage of  allowing angular momentum  to vary in  the full phase space is that
 it gives us a fuller understanding of the two eigenvectors associated
 to the symmetry induced kernel of the Hessian described \ref{ss: symmetry}.  
 One of these had eigenvalue $0$ and so  lies in the central direction $E^0$ and is easy to understand.  But the other
 eigenvector has eigenvalue  $\frac{ -\nu_0}{2}$ and is harder to understand.   
 Write $J$ for the angular momentum. We have $J(q, v) = \sum m_a q_a \wedge v_a$ which we  write symbolically as $J(q, v) = q \wedge v$.
Recall that $q = r s,  v = r^{-1/2} y$ and  $y = s + \nu w$ so that $J = r^{1/2} s \wedge y = r^{1/2} s \wedge w$.
 The bivector $\tilde J(s, w) = s \wedge w$ is the scale-invariant version of angular momentum and is a non-zero  analytic function
 which extends to our collision manifold.  From  $\tilde J = r^{-1/2} J$, $J' = 0$ 
 and $r' = r \nu$ we get the evolution equation $\tilde J' = - \frac{ \nu}{2} \tilde J$.  Take (some or all of) the components of 
 $\tilde J$ as forming   part  of a coordinate system on the collision manifold in a \nbhd of an equilibrium point.
 When we linearize at our equilibrium  point $\xi_0$ given by $(r, \nu, s, w) = (0, \nu_0, s_0, 0):= \xi_0$ with
 $\nu_0 < 0$  we get $ \delta \tilde J' = - \frac{ \nu_0}{2} \delta \tilde J$.  This accounts
 for the eigenvalue $-\nu_0/ 2$.  Symmetry-induced elements of the kernel of the Hessian
have  the form $As_0$ where $A \in \mathfrak g$ is a skew-symmetric operator acting
 simultaneously on each component $s_{0a}$ of $s_0$.  The associated
 eigenvector pair for $L$  is $e_1 = (0, 0, As_0, 0)$ with eigenvalue $0$  and $e_2 = (0, 0, As_0, \frac{ -\nu_0}{2} A s_0)$, 
 with  eigenvalue $- \nu_0/ 2$.   One verifies  that $d \tilde J (\xi_0) (e_2)=  (\nu_0/2 )(s_0 \wedge A s_0 )\ne 0$
 so that  these $e_2$'s (as $A$ varies)  account for the $\delta \tilde J$-type eigenvalues with eigenvalue  $-\nu_0/ 2$.
  
\subsection{ A note on symmetries}

It has been conceptually helpful for us at times to realize that 
\beq \mathfrak g(s_0) = (\mathfrak g (s_0) \cap Hor) \oplus (\mathfrak g (s_0) \cap V)
\Leq{gdirectsum}
and the corresponding phase space assertion 
\beq \mathfrak g(s_0) = (\mathfrak g (s_0) \cap HOR) \oplus (\mathfrak g (s_0) \cap VERT)
\Leq{gdirectsum2}
when $s_0$ is thought of as an incoming equilibrium in phase space in this second equation.
The issue here   is that if   $S$ is a subspace of a direct sum
$A \oplus B$ then it is typically false that $S = (A \cap S) \oplus (B \cap S)$.
(Think of the diagonal $x=y$ viewed as a subspace of $\R^2 = \R \oplus \R$.)
But in our case it is true that $S = (A \cap S) \oplus (B \cap S)$

We  prove \eqref{gdirectsum} for the case of a planar configuration $s_0$  when $d=3$
then make a comment about how this proof generalizes to the full situation.
So we assume that $span(s_0) = \R^2$ is the xy plane with basis $e_1, e_2$
and perpendicular direction  spanned by $e_3$.   Identify
$\mathfrak g$ with $\R^3$ where $\omega \in \mathfrak g = \R^3$ acts on a configuration $s =  (s_1, s_2, \ldots s_N)$ by  $s \mapsto (\omega, s) \mapsto \omega \times s =
 (\omega \times s_1, \omega \times s_2, \ldots , \omega \times s_N)$.
Take $\mathfrak g$ and also split it into $\R^2 \oplus \R$ with the $\R$ factor spanned by $e_3$
and the $\R^2$ factor by $e_1, e_2$.    Now $\mathfrak g (s_0) = Span( e_1 \times s_0, e_2 \times s_0, e_3 \times s_0)$.
Cross product by $e_3$ maps the xy plane
to itself, and hence preserves horizontality.  It follows that $\R (e_3 \times s_0) \mathfrak g (s_0) \cap Hor$.
(It is also true that $e_3 \times s_0 \ne 0$ since $e_3 \times s_0$ is the result of  rotating   $s_0$ by 90 degrees
within the horizontal plane.)    Now look at the action of $e_1$ on $s_0$.  Any component $s_{0j}$ of $s_0$
ais of the form $a e_1 + be_2$ ane $e_1 \times (a e_1 + b e_2) = b e_3$.  Thus $e_1 \times s_0 \subset V$.
Similarly $e_2 \times s_0 \subset V$.  
We have just shown that $Span(e_1 \times s_0, e_2 \times s_0) \subset V$ while
$Span(e_3 \times s_0) \subset Hor$.  Combining these last results yields \eqref{gdirectsum}.
  
For the general case of  \eqref{gdirectsum}, we have that $span(s_0) = \R^k \subset \R^d$ with $k < d$.  The vertical-horizontal splitting
induces a splitting of elements of  $\mathfrak g$ into two-by-two-block matrices of the form hor-hor,  hor-vert , vert-hor and vert-vert.
The hor-hor  component plays the role of $\omega = e_3$ in the above paragraph.  The vert-hor and hor-vert skew blocks
are spanned by bivectors of the form $e_h \wedge e_v$ with $e_h$ horizontal and $e_v$ vertical.  They play the role of
$\omega = e_1$ and $\omega = e_2$ above.    The vert-vert block acts trivially on $s_0$.  The proof then follows the same
basic lines.

\hfill\newline
\noindent{\bf Acknowledgement.} Both authors thanks CIMAT its hospitality where part of this work was done. 
\bibliographystyle{abbrv}
\bibliography{RefFourBody}

\end{document}